\newtheorem{theorem}{Theorem}[section]
\newtheorem{proposition}[theorem]{Proposition}
\newtheorem{lemma}[theorem]{Lemma}
\newtheorem{definition}[theorem]{Definition}
\newtheorem{remark}[theorem]{Remark}
\newcommand{\occult}[1]{}
\newcommand\diam{{\operatorname{diam}}}
\newcommand\eps{\epsilon}
\newcommand\NN{{\mathbb N}}
\newcommand\RR{{\mathbb R}}
\newcommand\ZZ{{\mathbb Z}}
\newcommand{\ph}{\varphi}
\newcommand{\phigeo}{\varphi^{\mathrm{geo}}}
\newcommand{\GGG}{\mathcal{G}}
\newcommand{\DDD}{\mathcal{D}}
\newcommand{\SSS}{\mathcal{S}}
\newcommand{\PPP}{\mathcal{P}}
\newcommand{\QQQ}{\mathcal{Q}}
\newcommand{\NNN}{\mathcal{V}}
\newcommand{\Pexp}{P_\mathrm{exp}^\perp}
\begin{document}

\title[Equilibrium states]{Equilibrium states for certain partially hyperbolic attractors}

\begin{abstract} We prove that a class of partially hyperbolic attractors introduced by Castro and Nascimento have unique equilibrium states for natural classes of potentials.  We also show if the attractors are $C^2$ and have invariant stable and centerunstable foliations, then there is a unique equilibrium state for the geometric potential and its 1-parameter family.  We do this by applying general techniques developed by Climenhaga and Thompson.
\end{abstract}

\author{Todd Fisher and Krerley Oliveira}

\address{T.~Fisher, Department of Mathematics, Brigham Young University, Provo, UT 84602, USA, \emph{E-mail address:} \tt{tfisher@mathematics.byu.edu}}
\address{K.~Oliviera, Instituto de Matem\'atica, UFAL 57072-090 Macei\'o, AL, Brazil, \emph{E-mail address:} \tt{krerley@gmail.com}}

\thanks{T.F.\ is supported by Simons Foundation Grant \# 239708.  K.O.\ is partially supported by CNPq, CAPES, FAPEAL, INCTMAT and Foundation Louis D}

\subjclass[2010]{37D35, 37C40, 37D30}
\keywords{Dynamical systems; partially hyperbolic; equilibrium states; attractor; measures maximizing the entropy}

\maketitle

\section{Introduction}

The notions of topological pressure and equilibrium states were introduced by  Sinai, Ruelle, and Bowen \cite{Bow75, dR76, jS72}.  These are generalizations of the notion of topological entropy and measures of maximal entropy, and they provide many useful invariants for studying the properties of a dynamical system.  

For a diffeomorphism $f:M\to M$ of a compact manifold and a continuous function $\ph:M\to \mathbb{R}$, called a {\it potential function}, the {\it topological pressure} is  $P(\ph; f)=\sup_{\mu}(h_\mu(f) + \int \ph d\mu)$ where the supremum is taken over all $f$-invariant Borel probability measures.  An $f$-invariant Borel probability measure that maximizes the quantity $h_\mu(f) + \int \ph d\mu$ is an {\it equilibrium state}.

A long standing problem is to find conditions that guarantee the existence and/or uniqueness of equilibrium states. 
For Axiom A diffeomorphisms and H\"older continuous potential functions there is a unique equilibrium state restricted to each basic set \cite{Bow75}.  Outside of the hyperbolic setting there are a number of results where the class of diffeomorphisms and potential functions are restricted \cite{BV, BT09, BCFT, BS03, CT1, DKU, Ho2}.  

Recently, new symbolic tools were obtained by Sarig \cite{Sarig13} for  $C^{1+\alpha}$ diffeomorphisms of surfaces with positive topological entropy. This result allowed Buzzi, Crovisier, and Sarig \cite{BCS} to obtain uniqueness of maximal entropy measures for transitive $C^\infty$ surface  diffeomorphisms with  positive topological entropy. %However, the general picture for any potential in any dimension is wide open. 

For non-uniformly expanding maps there have been a number of results on the existence and uniqueness of equilibrium states \cite{ O2003, OV,  VV}.  Recently, Castro and Nascimento \cite{CastroNascimento} examined measures of maximal entropy for a class of partially hyperbolic attractors that arise from non-uniformly expanding maps.  
%Ramos and Viana \cite{RV} looked at equilibrium states in a similar setting.

We examine equilibrium states for potentials defined on the partially hyperbolic attractors as described in \cite{CastroNascimento}.
Before stating the results we define the class of attractors we will investigate.

%\tf{We need to ensure that $g$ is topologically exact (was that the correct term?)}
We first describe the non-uniformly expanding maps that we will use in the definition of the attractors.
Let $N$ be a connected compact Riemannian manifold and $g:N\to N$ be a topologically exact local diffeomorphism with Lipschitz inverse branches, this means that there exists a function $L:N\to \mathbb{R}^+$ such that for all $x\in N$ there is a neighborhood $U_x$ of $x$ where $g_x:=g|_{U_x}:U_x\to g(U_x)$ is invertible and 
$$
d(g_x^{-1}(y), g_x^{-1}(z))\leq L(x) d(y,z) \textrm{ for all }y,z\in g(U_x).
$$
The degree of $g$  is the number of pre-images of any $x\in N$ by $g$ and is denoted by $\deg(g)$.  We assume there exists a constant $\lambda_u\in (0,1)$ and an open region $\Omega\in N$ such that we have the following:
\begin{itemize}
\item[(H1)] there exists a constant $L>1$ such that $L(x)\leq L$ for all $x\in \Omega$ and $L(x)<\lambda_u$ for $x\notin \Omega$; and
\item[(H2)] there exists a covering $\mathcal{P}$ of $N$ by injective domains of $g$ such that $\Omega$ can be covered by $q<\deg(g)$ elements of $\mathcal{P}$.
\end{itemize}

We now describe the attractors that arise from $g$.
Let $M$ be a compact manifold and $f:M\to M$ a diffeomorphism onto its image such that there is a continuous surjection $\pi:M\to N$ where
$$
\pi\circ f=g\circ \pi.
$$
Given $y\in N$ we set $M_y=\pi^{-1}(y)$.  Therefore, $M=\bigcup_{y\in N}M_y$.  Note that $f(M_y)\subset M_{g(y)}$, each $M_y$ is compact, and there is a maximum diameter for the sets $M_y$.  The next assumption will ensure that the $M_y$ are local stable manifolds for points in the attractor.
%\tf{Check that this implies that $M_y$ are compact with finite maximum diameter, if not add as condition} 
\begin{itemize}
\item[(H3)] Assume there exists some $\lambda_s\in (0,1)$ %\in (0,L^{-1})$ 
such that
$$
d(f(x), f(y))\leq \lambda_s d(x,y)
$$
for all $x,y\in M_z$ and all $z\in N$.
\end{itemize} 
The set $\Lambda=\bigcap_{n=0}^\infty f^n(M)$ is an attractor and $\Lambda$ is compact and $f$-invariant.  We also need the following fact so that the metric on $M$ is related to the metric on $N$.
\begin{itemize}
\item[(H4)]Given $x,y\in M$ we let $\hat{x}=\pi(x)$ and $\hat{y}=\pi(y)$.  Then there exist 
$f$-invariant 
holonomies $h_{\hat{x}, \hat{y}}:M_{\hat{x}}\cap \Lambda\to M_{\hat{y}}\cap \Lambda$  and a constant $C\geq 1$ such that 
$$
\frac{1}{C}[d_N(\hat x, \hat y) + d_M(h_{\hat{x}, \hat{y}}(x), y)]\leq d_M(x,y)\leq
C[d_N(\hat x, \hat y) + d_M(h_{\hat{x}, \hat{y}}(x), y)]
$$
where $d_M$, and $d_N$ are the metrics on $M$ and $N$ respectively.  Furthermore, we assume that the holonomies are invariant for $f$ so that $f(h_{\hat{x}, \hat{y}}(z))=h_{g(\hat{x}), g(\hat{y})}f(z).$
%\tf{Krerley you marked something with this second equation not sure what you intended here.}
\end{itemize}

The attractor $\Lambda$ can be described as ``solenoid-like" as it can be shown to be topologically conjugate to the natural extension of the system $(N, g)$.  The topological conjugacy can be useful in proving some of the properties we need, but we will work directly with the system $(\Lambda, f)$. %Although we believe that the answer is positive, we do not know if the conjugation between the attractor and the natural extension of $g$ is H\"older continuous. This would be useful for the study we do in this paper, in order to relate potentials on $M$ with potentials on $N$.
%it is not known that the conjugacy is H\"older continuous, and in the pressure estimates we need H\"older estimates below.  Due to this we do not use the conjugacy in the proofs, but use the dynamics of the system directly.

In \cite{CastroNascimento} it is shown that there is a unique measure of maximal entropy for $\Lambda$.  Furthermore, it is shown that this measure
\begin{itemize}
\item  has exponential decay of correlations for H\"older continuous functions, 
\item satisfies a Central Limit Theorem for any H\"older continuous function, and 
\item this measure varies continuously with respect to the weak$^*$-topology on the space of measures and the $C^1$-topology on the space of maps.
\end{itemize}
%\tf{What class of functions?  All?}

Before stating our main results we need to specify some constants.  Fix $\rho>0$ and let $\Omega_\rho=\bigcup_{x\in \Omega} B_\rho(x)$.  Let 
\begin{equation}\label{eq.alpha}
\alpha<\frac{\log \lambda_u}{\log \lambda_u - \log L}.
\end{equation}
We now define the function 
$$
\Psi_{\rho, \alpha}(\varphi)=\Psi(\varphi)=\alpha\sup_{\pi(x)\in\Omega_\rho}\ph(x) + (1-\alpha)(\sup_{\Lambda}\ph) + \log q + \epsilon(\alpha) + m\log L 
$$
where $m=\mathrm{dim}(N)$ and $\epsilon(\alpha)$ is a function such that $\epsilon(\alpha)\to 0$ as $\alpha \to 1$, see Lemma 3.1 of \cite{VV} or \cite{OV} for a description of $\epsilon(\alpha)$.

%\TF{$\epsilon$ is given in OV (2008) or VV.}

\begin{theorem}\label{t.unique} Let $f:\Lambda\to \Lambda$ be as described above and $\varphi:M\to \mathbb{R}$ be a H\"older continuous potential such that 
$\Psi(\varphi)< P(\ph; f|_\Lambda)$,
then $(\Lambda, f, \varphi)$ has a unique equilibrium state.
\end{theorem}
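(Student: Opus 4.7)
The strategy is to invoke the general uniqueness theorem of Climenhaga--Thompson, which demands a decomposition $\mathcal{D} = \mathcal{P} \cdot \mathcal{G} \cdot \mathcal{S}$ of the space of orbit segments together with (i) the specification property on $\mathcal{G}$ at some scale $\delta$, (ii) the Bowen (distortion) property for $\varphi$ on $\mathcal{G}$ at the same scale, and (iii) a strict pressure gap $P(\mathcal{P}\cup\mathcal{S},\varphi) < P(\varphi; f|_\Lambda)$. Because of the intertwining $\pi\circ f = g\circ\pi$, all three conditions can be reduced to statements about the non-uniformly expanding base map $g$, with the stable-fiber direction controlled by (H3) and the local product structure of (H4).

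For the decomposition I would use the parameter $\alpha$ of \eqref{eq.alpha} and declare an orbit segment $(x,n)$ to be \emph{good} when the frequency of iterates $0\le i<n$ with $g^i(\pi x)\in\Omega_\rho$ is at most $\alpha$ and both $0$ and $n$ are hyperbolic times for $\pi(x)$ under $g$. The prefix and suffix collections gather the initial and terminal excursions into $\Omega_\rho$ that prevent this balance; this mirrors the decomposition implicit in Castro--Nascimento's construction of the maximal entropy measure. Specification on $\mathcal{G}$ then follows from uniform backward contraction at hyperbolic times together with topological exactness of $g$, lifted from $N$ to $\Lambda$ via the holonomies of (H4) and the fiber contraction $\lambda_s$ of (H3). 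The Bowen property on $\mathcal{G}$ follows by combining H\"older continuity of $\varphi$ with a bounded-distortion estimate along the base trajectory (available thanks to the $\alpha$-bound on time spent in $\Omega_\rho$) and with the geometric decay of fiber distances, which renders the telescoping sum summable with a uniform constant.

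The pressure gap is the quantitative content of the hypothesis. The exponential growth rate of $(n,\delta)$-separated sets whose base projections spend more than an $\alpha$-fraction of their time in $\Omega_\rho$ is controlled on the base by $\alpha\sup_{\pi(x)\in\Omega_\rho}\varphi + (1-\alpha)\sup_\Lambda\varphi + \log q + \epsilon(\alpha)$, via (H1)--(H2) and Lemma 3.1 of \cite{VV}; each fiber adds at most $m\log L$ through the Lipschitz bound $L$ on inverse branches, yielding precisely $\Psi(\varphi)$ as an upper bound for $P(\mathcal{P}\cup\mathcal{S},\varphi)$. The assumption $\Psi(\varphi) < P(\varphi;f|_\Lambda)$ then supplies the needed gap, and Climenhaga--Thompson deliver the unique equilibrium state. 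The main obstacle I expect is the specification step: one must shadow simultaneously in the base (using hyperbolic times with uniform constants and topological exactness) and in the fiber (absorbing the error into the stable direction) without the constant $C$ of (H4) swamping the chosen scale $\delta$; this forces a careful choice of $\delta$ and $\rho$ so that the holonomy distortion, the stable-contraction rate, and the hyperbolic-time expansion are all compatible.
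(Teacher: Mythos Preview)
Your overall strategy is the paper's: verify the Climenhaga--Thompson hypotheses via a decomposition that tracks the fraction of time spent in $\Omega_\rho$, prove specification and the Bowen property on the good collection by combining backward contraction at hyperbolic times on the base with uniform fiber contraction, and bound the pressure of the bad collection by $\Psi(\varphi)$. Two points, however, deserve comment.

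First, a simplification you are missing. Because the fiber direction is uniformly contracted by (H3), no prefix collection is needed: the paper takes $\mathcal{P}=\emptyset$ and defines $(x,n)\in\mathcal{G}$ by the one-sided condition that every \emph{terminal} subsegment has $\Omega_\rho$-frequency below $\alpha$ (equivalently, $n$ is a hyperbolic time for $\pi(x)$), with $\mathcal{S}$ the segments whose $\Omega_\rho$-frequency is at least $\alpha$. Your two-ended hyperbolic-time requirement and nontrivial $\mathcal{P}$ are not wrong, but they add bookkeeping that the uniform stable contraction already absorbs.

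Second, and this is a genuine gap: your list of Climenhaga--Thompson hypotheses omits one. The theorem also requires $P_{\mathrm{exp}}^\perp(\varphi)<P(\varphi;f|_\Lambda)$, i.e.\ a bound on the pressure carried by ergodic measures giving mass to non-expansive points. This does not follow from the pressure gap on $\mathcal{P}\cup\mathcal{S}$ alone. The paper handles it by proving a property~[E]: if some sequence $n_k\to\infty$ has $\tfrac{1}{n_k}\sum_{i<n_k}\chi_{\Omega_\rho}(\pi f^i x)\le\alpha$, then $\Gamma_\epsilon(x)=\{x\}$. Consequently any ergodic measure charging $\mathrm{NE}(\epsilon)$ is supported on points whose long initial segments all lie in $\mathcal{S}$, giving $P_{\mathrm{exp}}^\perp(\varphi,\epsilon)\le P(\mathcal{S},\varphi)\le\Psi(\varphi)$. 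You need to insert this step before invoking the abstract theorem.

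One small misattribution: the term $m\log L$ in $\Psi(\varphi)$ does not come from the fibers. Here $m=\dim N$, and the term arises from a covering/volume count of $(n,\epsilon)$-separated orbits for $g$ on the base (the argument of Varandas--Viana, Lemma~3.1 and the cylinder counting there). The fibers contribute nothing to $h(\mathcal{S})$ because they are uniformly contracted.
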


Before stating the next theorem we point out that if $L$ is close to 1, then $\Psi(\ph)$ is close to $\sup_{\Omega_\rho} \phi +\log q$.  The above theorem then shows that so long as the potential is not too ``concentrated" in $\Omega$, then there will be a unique equilibrium state.

The next result shows that if the function $\ph$ does not vary too much, then there is a unique equilibrium state.

\begin{theorem}\label{t.variation}
Let $f:\Lambda\to \Lambda$ be as described above and $\varphi:M\to \mathbb{R}$ be a H\"older continuous potential such that 
$$
\sup \ph - \inf \ph < \log \deg(g) - \log q - \epsilon(\alpha) - m\log L,
$$
then $(\Lambda, f, \varphi)$ has a unique equilibrium state.
\end{theorem}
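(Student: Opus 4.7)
The plan is to deduce Theorem \ref{t.variation} directly from Theorem \ref{t.unique} by showing that the hypothesis on the variation of $\varphi$ forces the pressure gap $\Psi(\varphi) < P(\varphi; f|_\Lambda)$.

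First, I would bound $\Psi(\varphi)$ from above using only the supremum of $\varphi$. By definition
$$
\Psi(\varphi) = \alpha\sup_{\pi(x)\in\Omega_\rho}\varphi(x) + (1-\alpha)\sup_\Lambda\varphi + \log q + \epsilon(\alpha) + m\log L,
$$
and since $\sup_{\pi(x)\in\Omega_\rho}\varphi(x) \leq \sup_\Lambda \varphi$, the convex combination satisfies
$$
\Psi(\varphi) \leq \sup_\Lambda \varphi + \log q + \epsilon(\alpha) + m\log L.
$$

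Next, I would produce a matching lower bound on $P(\varphi; f|_\Lambda)$ from the other side. Using the variational principle applied to any invariant probability measure $\mu$ on $\Lambda$,
$$
P(\varphi; f|_\Lambda) \geq h_\mu(f) + \int \varphi \, d\mu \geq h_\mu(f) + \inf_\Lambda \varphi.
$$
Taking $\mu$ to be the measure of maximal entropy constructed in \cite{CastroNascimento}, one has $h_\mu(f) = h_{\top}(f|_\Lambda)$. The solenoid-like description identifies $(\Lambda,f)$ with the natural extension of $(N,g)$, so $h_{\top}(f|_\Lambda) = h_{\top}(g) = \log\deg(g)$ — the latter equality coming from the fact that $g$ is a topologically exact local diffeomorphism of degree $\deg(g)$. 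Hence
$$
P(\varphi; f|_\Lambda) \geq \log\deg(g) + \inf_\Lambda \varphi.
$$

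Combining the two bounds, the desired inequality $\Psi(\varphi) < P(\varphi; f|_\Lambda)$ reduces to
$$
\sup_\Lambda \varphi + \log q + \epsilon(\alpha) + m\log L < \log\deg(g) + \inf_\Lambda\varphi,
$$
which is precisely the hypothesis of Theorem \ref{t.variation}. Theorem \ref{t.unique} then produces the unique equilibrium state.

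The only subtle step is the identification $h_{\top}(f|_\Lambda) = \log\deg(g)$; apart from this I expect everything to be a one-line estimate. The bound on $h_{\top}(f|_\Lambda)$ should be cited from \cite{CastroNascimento} (where the maximal measure with entropy $\log\deg(g)$ is produced) together with the variational principle, or alternatively justified by a direct counting argument using that the inverse branches of $g$ lift through $\pi$ to a generating partition for $f|_\Lambda$ by (H3) and (H4). Everything else is a bookkeeping exercise with the constants in the definition of $\Psi$.
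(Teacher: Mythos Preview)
Your proposal is correct and follows essentially the same route as the paper. Both arguments use the bounds $P(\varphi; f|_\Lambda)\geq h_{\mathrm{top}}(f|_\Lambda)+\inf\varphi=\log\deg(g)+\inf\varphi$ (Proposition~\ref{p.entropyattractor}) and $\sup\varphi+\log q+\epsilon(\alpha)+m\log L$ as an upper bound on the ``bad'' pressure; the only cosmetic difference is that you invoke Theorem~\ref{t.unique} via $\Psi(\varphi)<P(\varphi)$, whereas the paper re-derives $P(\SSS,\varphi)<P(\varphi)$ and applies Theorem~\ref{t.generalM} directly.
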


%\TF{Add conditions that ensure partial hyperbolicity and existence of leaves from HPS}

In addition to the previous hypothesis that were assumed in \cite{CastroNascimento} we need some assumptions on the hyperbolic constants, and associated invariant foliations for the next result.

\begin{itemize}
\item[(H5)] There exists an $f$-invariant splitting $E^s\oplus E^{cu}$ for $\Lambda$ and $\lambda_s> L^{-1}$.  Also, there exists an $f$-invariant center-unstable foliation $\mathcal{W}^{cu}$ of $\Lambda$ that is tangent to the center-unstable subbundle $E^{cu}$ in $\Lambda$, and there exists an $f$-invariant stable foliation $\mathcal{W}^s$ tangent to the stable subbundle $E^s$ in $\Lambda$.
\end{itemize}

Let $\varphi^{\mathrm{geo}}(x)=-\log \det(Df|_{E^{cu}(x)})$ where $E^{cu}$ is the center-unstable subspace at $x$.  This is referred to as the {\it geometric potential}.
%\tf{We need to make sure this is an attractor for $f:M\to M$ as before.}

\begin{theorem}\label{t.srb}
Let $f$ be $C^2$ satisfying properties (H1)-(H5) above and %assume that 
%there exists some $\delta>0$ that 
%$sup_{x\in \Lambda}m_{W^{cu}(x)}(W^{cu}_\delta(x))<\infty$
%\ko{Is this always satisfied in our situation?}
%\tf{Not sure I would want to be careful.} 
%where $m_{W^{cu}(x)}$ is the volume on the leaf $W^{cu}(x)$ with the induced metric.  Furthermore, 
we assume that  
%\begin{equation}\label{eq.entropyrelation}
%\log (\deg g)> \log q + \epsilon(\alpha) + m\log L,
%\end{equation}
%\tf{check this is needed}
%and
\begin{equation}\label{eq.srbrelation}
\log q + \epsilon(\alpha) + m\log L< \min \left\{ \log \deg g, -\sup  \ph^{\mathrm{geo}}\right\},
\end{equation}

then the following hold:
\begin{itemize}
\item $t=1$ is the unique root of $t\mapsto P(t\varphi^{\mathrm{geo}}; f|_\Lambda)$, 
\item there is an $\epsilon>0$ such that $t\varphi^{\mathrm{geo}}$ has a unique equilibrium states $\mu_t$ for each $t\in (-\epsilon, 1+\epsilon)$, and
\item $\mu_1$ is the unique SRB measure for $f$.
\end{itemize}
\end{theorem}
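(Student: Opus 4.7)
The plan is to deduce Theorem~\ref{t.srb} from Theorem~\ref{t.unique} applied to the family of potentials $t\phigeo$, combined with the Pesin--Ruelle dictionary to identify equilibrium states with SRB measures. The assumption $f\in C^2$ together with (H5) and invariance of the splitting $E^s\oplus E^{cu}$ provides the H\"older regularity of $\phigeo$ needed to apply Theorem~\ref{t.unique}.

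The crucial input is that $P(\phigeo;f|_\Lambda)=0$. The upper bound $P(\phigeo;f|_\Lambda)\le 0$ is a direct consequence of Ruelle's inequality: the domination $\lambda_s>L^{-1}$ in (H5) forces all positive Lyapunov exponents of any $f$-invariant $\mu$ to lie in $E^{cu}$, so $h_\mu(f)\le -\int\phigeo\,d\mu$. For the lower bound I would exhibit an SRB measure for $(\Lambda,f)$: condition (\ref{eq.srbrelation}) forces $\sup_\Lambda\phigeo<0$ (since $\log q+\epsilon(\alpha)+m\log L\ge 0$), so $Df|_{E^{cu}}$ uniformly expands cu-volume; averaging pushforwards of normalized Lebesgue on a cu-disk, using $C^2$ distortion control along cu-disks and absolute continuity of $\mathcal{W}^s$, then produces an invariant measure with absolutely continuous unstable conditionals. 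Any such measure realizes Pesin's formula $h_\mu(f)=-\int\phigeo\,d\mu$, which gives $P(\phigeo;f|_\Lambda)\ge 0$.

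Once $P(\phigeo;f|_\Lambda)=0$ is in hand, the estimate $\Psi(\phigeo)\le\sup_\Lambda\phigeo+\log q+\epsilon(\alpha)+m\log L<0$, which follows directly from (\ref{eq.srbrelation}) by bounding both sup-terms in the definition of $\Psi$ by $\sup_\Lambda\phigeo$, yields $\Psi(\phigeo)<P(\phigeo;f|_\Lambda)$, so Theorem~\ref{t.unique} produces a unique equilibrium state $\mu_1$ for $\phigeo$. Since the maps $t\mapsto\Psi(t\phigeo)$ and $t\mapsto P(t\phigeo;f|_\Lambda)$ are continuous, the strict inequality persists on some interval $(1-\epsilon,1+\epsilon)$, giving the second bullet. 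For uniqueness of the root, $\sup_\Lambda\phigeo<0$ makes every affine function $t\mapsto h_\mu(f)+t\int\phigeo\,d\mu$ have slope bounded above by $\sup_\Lambda\phigeo<0$; taking the supremum, $t\mapsto P(t\phigeo;f|_\Lambda)$ is strictly decreasing on $\mathbb{R}$, so $t=1$ is its unique zero. Finally, $\mu_1$ is the unique SRB measure: any SRB satisfies Pesin's formula and so is an equilibrium state for $\phigeo$, hence must equal $\mu_1$ by uniqueness, while existence comes from the construction used in the lower bound step.

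The hard part will be the lower bound $P(\phigeo;f|_\Lambda)\ge 0$, i.e., the construction of an SRB measure under hypotheses (H1)--(H5) together with (\ref{eq.srbrelation}). This is where the genuinely partially hyperbolic geometry of the Castro--Nascimento attractors enters: absolute continuity of $\mathcal{W}^s$ (which uses $C^2$ and (H5)), the uniform cu-expansion supplied by $\sup_\Lambda\phigeo<0$, and sharp $C^2$ distortion estimates along cu-disks must be combined carefully. Everything else in the proof is a largely formal consequence of Theorem~\ref{t.unique}, Ruelle's inequality, and convexity of the pressure in $t$.
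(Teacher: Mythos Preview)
Your overall architecture matches the paper's, but there are two genuine gaps.

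\textbf{Gap 1: the upper bound $P(\phigeo;f|_\Lambda)\le 0$.} Ruelle's inequality gives $h_\mu(f)\le\lambda^+(\mu)$, and you correctly note that all positive exponents lie in $E^{cu}$. But $-\int\phigeo\,d\mu=\int\log\det(Df|_{E^{cu}})\,d\mu$ equals the sum of \emph{all} exponents in $E^{cu}$, not just the nonnegative ones. If some $E^{cu}$-exponent is nonpositive, then $-\int\phigeo\,d\mu<\lambda^+(\mu)$ and Ruelle's inequality does \emph{not} yield $h_\mu(f)\le -\int\phigeo\,d\mu$. The paper handles exactly this case: if an ergodic $\mu$ has a nonpositive $E^{cu}$-exponent, then $\mu$-a.e.\ orbit must eventually spend a proportion $\ge\alpha$ of its time in $\Omega$, so $\mu$ is supported on the set $\Lambda^-$ and hence $h_\mu(f)+\int\phigeo\,d\mu\le P(\SSS,\phigeo)\le\Psi(\phigeo)<0$. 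This dichotomy is what actually gives $P(\phigeo;f|_\Lambda)\le 0$; a bare appeal to Ruelle is insufficient in the genuinely partially hyperbolic setting.

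\textbf{Gap 2: uniqueness on the full interval $(-\epsilon,1+\epsilon)$.} Your continuity argument only gives $\Psi(t\phigeo)<P(t\phigeo;f|_\Lambda)$ on a neighborhood of $t=1$. The theorem asserts uniqueness on an interval containing all of $[0,1]$, and this is where the other half of hypothesis~\eqref{eq.srbrelation}, namely $\log q+\epsilon(\alpha)+m\log L<\log\deg g$, is used --- you never invoke it. The paper argues with two linear functions: $l_1(t)=\log\deg g+t\inf\phigeo\le P(t\phigeo;f|_\Lambda)$ and $l_2(t)=t\sup\phigeo+\log q+\epsilon(\alpha)+m\log L\ge\Psi(t\phigeo)$. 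Condition~\eqref{eq.srbrelation} gives $l_1(0)>l_2(0)$, and comparing the roots of $l_1,l_2$ gives $l_1>l_2$ on $[0,t_0]$ where $t_0$ is the root of $l_2$; for $t\in(t_0,1]$ one has $\Psi(t\phigeo)<0\le P(t\phigeo;f|_\Lambda)$ directly. This covers all of $[0,1]$, and continuity extends to $(-\epsilon,1+\epsilon)$.

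The remaining pieces --- the lower bound $P(\phigeo;f|_\Lambda)\ge 0$ (the paper cites \cite{CFT_BV} rather than building an SRB measure, but your approach is viable), strict monotonicity of $t\mapsto P(t\phigeo)$ from $\sup\phigeo<0$, and identification of $\mu_1$ as the unique SRB via Pesin's formula --- are handled correctly in your sketch.
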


In Section \ref{s.srb} we describe an example  where the hypothesis hold for a class of systems.  This class of systems is given by a DA-type perturbation in a neighborhood of a fixed point for a uniformly expanding endomorphism.

%We are also able to examine another classes of related systems.  This is a class of natural extensions that arise from local diffeomorphisms.  We do this in Section \ref{s.naturalextensions}.

The paper proceeds as follows.  Section \ref{s.background} lists background material as well as the results from \cite{CT} that we need for our result.  Section \ref{s.decomp} defines the decomposition of orbits we need to apply the theorem from \cite{CT} and we prove properties for this decomposition of specification and the Bowen property. Section \ref{s.proofs} contains entropy and pressure estimates need to complete the proofs for Theorems \ref{t.unique} and \ref{t.variation}.  Section \ref{s.srb} contains the proof of Theorem \ref{t.srb} as well as an example of a system satisfying the hypotheses.

\section{Background}\label{s.background}

In this section we review basic properties of partial hyperbolicity.  We also outline the results of Climenhaga and Thompson in \cite{CT}.

\subsection{Partial hyperbolicity}

Let $M$ be a compact manifold.  Recall that a diffeomorphism $f\colon M\to M$ is \emph{(weakly) partially hyperbolic} if there is a $Df$-invariant splitting $TM=E^s\oplus E^c\oplus E^u$, where at least one of $E^s$ or $E^u$ is nontrivial,  and constants $N\in \NN$, $\lambda>1$ such that for every $x\in M$ and every unit vector $v^\sigma\in E^\sigma$ for $\sigma\in \{s, c, u\}$, we have
\begin{enumerate}
\item[(i)] $\lambda \|Df^N_x v^s\|<\|Df^N_x v^c\|<\lambda^{-1}\|Df^N_x v^u\|$, and
\item[(ii)] $\|Df^N_x v^s\|<\lambda^{-1}<\lambda<\|Df^N_x v^u\|$.
\end{enumerate}

A partially hyperbolic diffeomorphism $f$ admits \emph{stable and unstable foliations} $W^s$ and $W^u$, which are $f$-invariant and tangent to $E^s$ and $E^u$, respectively \cite[Theorem 4.8]{yP04}.  In our situation (H5) assures the existence of a stable foliation and the local stable leaves are given by the $M_y$.  There may or may not be foliations tangent to either $E^c$, $E^s\oplus E^c$, or $E^c\oplus E^u$.  When these exist we denote these by $W^c$, $W^{cs}$, and $W^{cu}$ and refer to these as the center, center-stable, and center-unstable foliations respectively.  For $x\in M$, we let $W^{\sigma}(x)$ be the leaf of the foliation $\sigma\in \{s, u, c, cs, cu\}$ containing $x$ when this is defined.  In our situation we know there are $W^{cu}$ leaves in the attractor.

\subsection{Pressure} 
Let $f\colon X\to X$ be a continuous map on a compact metric space.  We identify $X\times \mathbb{N}$ with the space of finite orbit segments by identifying $(x,n)$ with $(x,f(x),\dots,f^{n-1}(x))$.

Given a continuous potential function $\varphi\colon X\to \mathbb{R}$, write
$
S_n\varphi(x) = S_n^f \ph(x) = \sum_{k=0}^{n-1} \varphi(f^kx)
$.
%for the ergodic sum along an orbit segment.  
%For each $\eta>0$, write 
%\[
%\Var(\ph,\eta) = \sup\{|\ph(x)-\ph(y)| \,:\, x,y\in X, d(x,y) < \eta\}.
%\]
%Since we consider H\"older potentials, we will often use the bound
%\[
%\Var(\ph,\eta) \leq |\ph|_\alpha \eta^\alpha,
%\text{ where }
%|\ph|_\alpha := \sup_{x\neq y} \frac{|\ph(x)-\ph(y)|}{d(x,y)^\alpha}.
%\]
The $n$th Bowen metric associated to $f$ is defined by
\[
d_n(x,y) = \max \{ d(f^kx,f^ky) \,:\, 0\leq k < n\}.
\]
Given $x\in X$, $\eps>0$, and $n\in \NN$, the \emph{Bowen ball of order $n$ with center $x$ and radius $\eps$} is
$
B_n(x,\eps) = \{y\in X \, :\,  d_n(x,y) < \eps\}.
$
A set $E\subset X$ is $(n,\eps)$-separated if $d_n(x,y) \geq \eps$ for all $x,y\in E$.

Given $\mathcal{D}\subset X\times \mathbb{N}$, we interpret $\DDD$ as a \emph{collection of orbit segments}. Write $\mathcal{D}_n = \{x\in X \, :\,  (x,n)\in \mathcal{D}\}$ for the set of initial points of orbits of length $n$ in $\mathcal{D}$.  Then we consider the partition sum
$$
\Lambda^{\mathrm{sep}}_n(\DDD,\ph,\epsilon; f) =\sup
\Big\{ \sum_{x\in E} e^{S_n\ph(x)} \, :\,  E\subset \mathcal{D}_n \text{ is $(n,\epsilon)$-separated} \Big\}.
$$
The \emph{pressure of $\ph$ on $\DDD$ at scale $\eps$} is 
$$
P(\mathcal{D},\ph,\epsilon; f) = \varlimsup_{n\to\infty} \frac 1n \log \Lambda^{\mathrm{sep}}_n(\mathcal{D},\ph,\epsilon),
$$
and the \emph{pressure of $\varphi$ on $\mathcal{D}$} is
$$
P(\mathcal{D},\ph; f) = \lim_{\epsilon\to 0}P(\mathcal{D},\ph,\epsilon).
$$

Given $Z \subset X$, let $P(Z, \varphi, \epsilon; f) := P(Z \times \NN, \varphi, \epsilon; f)$; observe that $P(Z, \varphi; f)$ denotes the usual upper capacity pressure \cite{Pesin}. We often write $P(\ph;f)$ in place of $P(X, \ph;f)$ for the pressure of the whole space.

When $\ph=0$, our definition gives the \emph{entropy of $\mathcal{D}$}:
\begin{equation}\label{eqn:h}
\begin{aligned}
h(\mathcal{D}, \epsilon; f)= h(\mathcal{D}, \epsilon) &:= P(\mathcal{D}, 0, \epsilon) \mbox{ and } h(\mathcal{D})= \lim_{\epsilon\rightarrow 0} h(\mathcal{D}, \epsilon).
\end{aligned}
\end{equation}

Write $\mathcal{M}(f)$ for the set of $f$-invariant Borel probability measures, and $\mathcal{M}_e(f)$ for the set of ergodic measures in $\mathcal{M}(f)$.
The variational principle for pressure \cite[Theorem 10.4.1]{VO} states that %if $X$ is a compact metric space and $f$ is continuous, then 
\[
P(\varphi;f)=\sup_{\mu\in \mathcal{M}(f)}\left\{ h_{\mu}(f) +\int \varphi \,d\mu\right\}
=\sup_{\mu\in \mathcal{M}_e(f)}\left\{ h_{\mu}(f) +\int \varphi \,d\mu\right\}.
\]
A measure achieving the supremum is an \emph{equilibrium state}.

\subsection{Obstructions to expansivity, specification, and regularity}

Bowen showed in \cite{Bow75} that if $(X,f)$ has expansivity and specification, and $\ph$ has a certain regularity property (now called the Bowen property), then there is a unique equilibrium state. We recall definitions and results from \cite{CT}, which show that non-uniform versions of Bowen's hypotheses suffice to prove uniqueness.

Given a homeomorphism $f\colon X\to X$, the \emph{bi-infinite Bowen ball around $x\in X$ of size $\eps>0$} is the set
\[
\Gamma_\eps(x) := \{y\in X \, :\,  d(f^kx,f^ky) < \eps \text{ for all } n\in \ZZ \}.
\]
If there exists $\eps>0$ for which $\Gamma_\eps(x)= \{x\}$ for all $x\in X$, we say $(X, f)$ is \emph{expansive}. 
%When $f$ is not expansive, it is useful to consider the \emph{tail entropy} of $f$ at scale $\eps>0$ \cite{rB72,mM76}:
%\begin{equation}\label{eqn:Bowen-tail-entropy}
%h_f^*(\epsilon) = \sup_{x\in X} \lim_{\delta\to 0} \limsup_{n\to\infty}
%\frac 1n \log \Lspan_n(\Gamma_\eps(x),\delta;f),
%\end{equation}
%where for a set $Y \subset X$, $\Lspan_n(Y, \delta;f) =\inf \{ \# E :\ Y \subset \bigcup_{x\in E} \overline{ B_n(x,\delta)} \}$.

\begin{definition} \label{almostexpansive}
For $f\colon X\rightarrow X$ the set of non-expansive points at scale $\epsilon$ is  $\mathrm{NE}(\epsilon):=\{ x\in X \, :\,  \Gamma_\eps(x)\neq \{x\}\}$.  An $f$-invariant measure $\mu$ is  almost expansive at scale $\epsilon$ if $\mu(\mathrm{NE}(\epsilon))=0$.  Given a potential $\varphi$, the pressure of obstructions to expansivity at scale $\epsilon$ is
\begin{align*}
\Pexp(\varphi, \epsilon) &=\sup_{\mu\in \mathcal{M}_e(f)}\left\{ h_{\mu}(f) + \int \varphi\, d\mu\, :\, \mu(\mathrm{NE}(\epsilon))>0\right\} \\
&=\sup_{\mu\in \mathcal{M}_e(f)}\left\{ h_{\mu}(f) + \int \varphi\, d\mu\, :\, \mu(\mathrm{NE}(\epsilon))=1\right\}.
\end{align*}
This is monotonic in $\eps$, so we can define a scale-free quantity by
\[
\Pexp(\varphi) = \lim_{\epsilon \to 0} \Pexp(\varphi, \epsilon).
\]
\end{definition}

\begin{definition} 
A collection of orbit segments $\mathcal{G}\subset X\times \mathbb{N}$ has $($W\,$)$-\emph{specification at scale $\epsilon$} if there exists $\tau\in\mathbb{N}$ and $k_0$ such that for every $\{(x_j, n_j)\, :\, 1\leq j\leq k\}\subset \mathcal{G}$ with $n_j>k_0$, there is a point $x$ in
$$\bigcap_{j=1}^k f^{-(m_{j-1}+ \tau)}B_{n_j}(x_j, \epsilon),$$
where $m_{0}=-\tau$ and $m_j = \left(\sum_{i=1}^{j} n_i\right) +(j-1)\tau$ for each $j \geq 1$.
\end{definition}

The above definition says that there is some point $x$ whose trajectory shadows each of the $(x_i,n_i)$ in turn, taking a transition time of exactly $\tau$ iterates between each one.  The numbers $m_j$ for $j\geq 1$ are the time taken for $x$ to shadow $(x_1, n_1)$ up to $(x_j, n_j)$.

\begin{definition} \label{Bowen}
Given $\mathcal{G}\subset X\times \mathbb{N}$, a potential $\varphi$  has the \emph{Bowen property on $\GGG$ at scale $\epsilon$} if
\[
V(\GGG,\ph,\epsilon) := \sup \{ |S_n\varphi (x) - S_n\varphi(y)| : (x,n) \in \GGG, y \in B_n(x, \epsilon) \} <\infty.
\]
We say $\varphi$ has the \emph{Bowen property on $\GGG$} if there exists $\epsilon>0$ so that $\varphi$ has the Bowen property on $\GGG$ at scale $\epsilon$.
\end{definition}

%We refer to an upper bound for $V(\GGG,\ph,\eps)$ as a \emph{distortion constant}.
Note that if $\GGG$ has the Bowen property at scale $\eps$, then it has it for all smaller scales. 

%%%%%%%%%%%%%%%%%%%%%%%%%%%%%%%%%%%%

\subsection{General results on uniqueness of equilibrium states}

Our main tool for existence and uniqueness of equilibrium states is \cite[Theorem 5.5]{CT}.

\begin{definition}
A \emph{decomposition} for $(X,f)$ consists of three collections $\mathcal{P}, \mathcal{G}, \mathcal{S}\subset X\times (\NN\cup\{0\})$ and three functions $p,g,s\colon X\times \mathbb{N}\to \NN\cup\{0\}$ such that for every $(x,n)\in X\times \NN$, the values $p=p(x,n)$, $g=g(x,n)$, and $s=s(x,n)$ satisfy $n = p+g+s$, and 
\begin{equation}\label{eqn:decomposition}
(x,p)\in \mathcal{P}, \quad (f^p(x), g)\in\mathcal{G}, \quad (f^{p+g}(x), s)\in \mathcal{S}.
\end{equation}
%Given a decomposition $(\PPP,\GGG,\SSS)$ and $M\in \mathbb{N}$, we write $\GGG^M$ for the set of orbit segments $(x,n)$ for which $p \leq M$ and $s\leq M$.
\end{definition}

Note that the symbol $(x,0)$ denotes the empty set, and the functions $p, g, s$ are permitted to take the value zero. 
 
\begin{theorem}[Theorem 5.5 of \cite{CT}]\label{t.generalM}
Let $X$ be a compact metric space and $f\colon X\to X$ a homeomorphism. 
Let $\ph \colon X\to\RR$ be a continuous potential function.
Suppose that $\Pexp(\ph) < P(\ph)$, and that $(X,f)$ admits a decomposition $(\PPP, \GGG, \SSS)$ with the following properties:
\begin{enumerate}
\item $\GGG$ has (W)-specification at any scale;
\item  $\ph$ has the Bowen property on $\GGG$;
\item $P(\PPP \cup \SSS,\ph) < P(\ph)$.
\end{enumerate}
Then there is a unique equilibrium state for $\ph$.
\end{theorem}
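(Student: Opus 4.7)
The plan is to adapt Bowen's classical proof of uniqueness under specification plus Bowen property to this weaker setting, localizing the hypotheses to the collection $\GGG$ and absorbing the remaining orbit segments via the pressure gap $P(\PPP\cup\SSS,\ph)<P(\ph)$ and the obstruction-to-expansivity gap $\Pexp(\ph)<P(\ph)$. For existence I would invoke the variational principle together with the standard weak$^*$ compactness argument: take $(n,\eps)$-separated subsets of $X$ realizing the pressure at scale $\eps$, form the corresponding weighted empirical measures, extract a weak$^*$ limit $\mu_\eps$, and let $\eps\to 0$; the hypothesis $\Pexp(\ph,\eps)<P(\ph)$ for small $\eps$ upgrades the usual upper semicontinuity argument so that each limit point is an equilibrium state.

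The heart of the matter is uniqueness, and the strategy is to establish a Gibbs-like property for every ergodic equilibrium state $\mu$. First, using $P(\PPP\cup\SSS,\ph)<P(\ph)$, I would show that for $\mu$-a.e.\ $x$ and all sufficiently large $n$ the decomposition $(x,n)=(p,g,s)$ satisfies $g\geq (1-\delta)n$ with $\delta$ controlled by the pressure gap, so that $\mu$ is concentrated on orbits whose long subsegments lie in $\GGG$. Next, on such $\GGG$-anchored segments I would derive two-sided bounds of Gibbs form
\[
C^{-1}e^{S_n\ph(x)-nP(\ph)}\leq \mu(B_n(x,\eps))\leq Ce^{S_n\ph(x)-nP(\ph)}.
\]
The upper bound is obtained by covering $X$ with Bowen balls and using $P(\PPP\cup\SSS,\ph)<P(\ph)$ to discard the contribution of bad prefixes and suffixes. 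The lower bound uses specification on $\GGG$ at a small scale to glue $\GGG$-segments into approximately periodic orbits, and the Bowen property on $\GGG$ to control the distortion of $S_n\ph$ across the transition times $\tau$.

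With the Gibbs bounds in hand, suppose $\mu_1$ and $\mu_2$ are two distinct ergodic equilibrium states. They both assign full measure to $\GGG$-heavy orbits and both satisfy the same Gibbs bounds on $\GGG$-anchored Bowen balls at exponential rate $P(\ph)$. A comparison-of-measures argument along typical orbits — made rigorous by almost expansivity $\mu_i(\mathrm{NE}(\eps))=0$, so that Bowen balls genuinely separate points — shows that $\mu_1$ and $\mu_2$ cannot be mutually singular, contradicting distinctness of ergodic measures.

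The main obstacle will be the Gibbs lower bound on $\mu(B_n(x,\eps))$ for $\GGG$-anchored $(x,n)$. Specification provides the existence of a shadowing orbit, not automatically a lower bound on $\mu$-mass; to bridge this gap one must count distinct shadowing orbits, verify their $(n,2\eps)$-separation using almost expansivity, and normalize against the partition sum $\Lambda^{\mathrm{sep}}_n(\GGG,\ph,\eps)$, whose exponential growth rate equals $P(\ph)$ thanks to the pressure gap. The technical delicacy is in making the three hypotheses cooperate through the bookkeeping of prefixes, cores, and suffixes, and this is where the full strength of the decomposition framework $(\PPP,\GGG,\SSS)$ is used.
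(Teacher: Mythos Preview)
The paper does not prove this theorem; it is quoted verbatim as Theorem~5.5 of \cite{CT} and used as a black-box tool to obtain Theorems~\ref{t.unique}--\ref{t.srb}. There is therefore no proof in the paper to compare your proposal against.

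That said, your outline is a reasonable high-level sketch of the actual Climenhaga--Thompson argument: existence via weak$^*$ limits of weighted separated-set measures (with almost-expansivity replacing expansivity to guarantee the limit is an equilibrium state), and uniqueness via a non-uniform Gibbs property on $\GGG$, with the lower Gibbs bound driven by specification and the upper bound by counting arguments that exploit the pressure gap $P(\PPP\cup\SSS,\ph)<P(\ph)$. Be aware, though, that the real proof in \cite{CT} is substantially more delicate than your sketch suggests. In particular, the lower Gibbs bound is not obtained by gluing into approximately periodic orbits; rather one constructs the candidate equilibrium state as a limit of measures built from separated subsets of $\GGG_n$, shows this limit has the Gibbs property directly, and then proves that any other equilibrium state must be absolutely continuous with respect to it. The bookkeeping needed to pass from ``$\GGG$ carries full pressure'' to ``every equilibrium state is $\GGG$-adapted'' is where most of the work lies, and your sketch glosses over this.
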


\section{Decomposition, specification, the Bowen property, and nonexpansive points}\label{s.decomp}

In this section we establish a number of the properties needed in Theorems \ref{t.unique} -\ref{t.srb}.  We leave most of the entropy and pressure estimates until the next section.

\subsection{Decomposition}

We first define the decomposition we will use on the orbits in order to apply Theorem \ref{t.generalM}.  As the stable direction is uniformly hyperbolic we can allow the set $\PPP=\emptyset$ and the function $p(x,n)=0$ for all $(x,n).$  An orbit segment $(x,n)\in \SSS$ if  
$$\beta(x,n):=\frac 1 n \sum_{i=0}^{n-1}\chi_{\Omega_\rho}(\pi(f^i x))\geq \alpha$$
 where $\alpha$ is given by \eqref{eq.alpha} and $\chi_{\Omega_\rho}$ is the characteristic function for $\Omega_\rho$.  An orbit segment $(x,n)\in \GGG$ if $n=0$ or for all $j\in \{ 1,..., n\}$ we have
 $$
\beta(f^jx, n-j)= \frac 1 j \sum_{i=n-j}^{n-1}\chi_{\Omega_\rho}(\pi(f^{i}x))<\alpha.
 $$

The definition of  the collection of  orbits $\GGG$  are iterates  chosen  so that the center-unstable direction is uniformly contracted by $f^{-1}$ along orbits segments from $f^n(x)$ to $x$. This is related to the notion of hyperbolic times introduced by Alves \cite{Alves}.

\begin{remark}\label{r.concatenation}
Arguing in a similar fashion as in \cite{CFT_Mane, CFT_BV} we may check the following concatenation property of $\GGG$: if $(x,n)$ and $(f^n(x),m)$ are in $\GGG$, then $(x,n+m)\in \GGG$.  
\end{remark}

% \tf{Make it clear we are defining the decomposition here}
We now define the decomposition we use for the orbits segments.  For an arbitrary orbit segment $(x,n)$ we let $s\in \{0,1,..., n\}$ be the largest integer such that 
$(x,s)\in \GGG.$  Notice this implies that $(f^s x, n-s)\in \SSS$.  Indeed, suppose that $(f^s x,n-s)\notin \SSS$, we may consider $k\in \{1,..., n-s\}$, such that $k=\min\{i\geq 1;(f^s x,i)\notin \SSS\}.$ Observe that this imply that $(f^s x,k)\in \GGG$ and since $(x,s)\in \GGG$, we have that $(x,s+k)\in \GGG$, contradicting the maximality of $s$. Thus, $(f^s x,n-s)\in \SSS$ and we have a decomposition.

%previously

 %For an arbitrary orbit segment $(x,n)$ we let $s\in \{1,..., n\}$ be the largest integer such that 
  %$$
 %\frac 1 s \sum_{i=0}^{s-1}\chi_{\Omega}(\pi(f^{n-s+i}x))\geq\alpha,
 %$$
 %so $(f^{n-s}(x), n-s)\in \SSS$.  Then $(x, n-s-1)\in\GGG$ provided $s<n$.  To see this notice that if 
  %$$
 %\frac {1} {n-s-k} \sum_{i=0}^{k-1}\chi_{\Omega}(\pi(f^{n-s-k+i}x))\geq\alpha,
 %$$
 %for some $k\in \{0,..., n-s-1\}$, then 
   %$$
 %\frac {1} {s+k} \sum_{i=0}^{s+k-1}\chi_{\Omega}(\pi(f^{n-s-k+i}x))\geq\alpha
 %$$
 %which implies that $(f^{n-s-k}(x), n-s-k)\in \SSS$, contradicting the maximality of $s$.
 %The collection $\GGG$ together with the trivial collection $\{(x,0)\, :\, x\in \Lambda\}$ for $\SSS$ for a decomposition as described above.

\subsection{Specification}

We now show that the set $\GGG$ described above has the desired specification property.
The proof will proceed first by proving the specification property at any scale $\delta$ for the map $g$.  As the decomposition is defined in terms of the projection map $\pi$ we have a canonically defined decomposition $(\hat \GGG, \hat \SSS)$
 for $g$ from the decomposition $(\GGG, \SSS)$ for $f$ described above.  More specifically, for $(x,n)\in \GGG$ we let $(\hat x, n)\in \hat \GGG$ and $(x,n)\in \SSS$ implies $(\hat x, n)\in \hat \SSS$.  The decomposition for $(\hat x, n)$ is then defined by $(\hat x, s)\in \hat \GGG$ and $(g^s \hat x, n-s)$ if $(x,n)$ decomposes by to $(x, s)$ and $(f^s x, n-s)$.

We fix a constant related to the contraction %in the center-unstable direction for $f^{-1}$ 
for good orbit segments.
Let 
$$\theta_\alpha=L^\alpha (\lambda_u^{-1})^{1-\alpha}\in (0,1).$$

%First, we prove that $\GGG$ has a strong form of specification for $g$ at any scale $\delta$. 
 
\begin{proposition}\label{specforg} Given $\epsilon$, there exist $\tau=\tau(\epsilon)$  such that if  %$\hat x_j\in N$ 
$\{(\hat x_j,n_j)\}_{j=0}^\ell \subset \hat \GGG$, then there exists $\tau_j \leq \tau$ for $j=1,\dots,\ell$ and  some $\hat z \in N$ such that

$$
d(g^{m}(\hat x_j),g^{m+r_{j-1}}(\hat z)) \leq \epsilon,
$$ for $0\leq m \leq n_j$, where $r_{0}=0$ and $r_j = \sum_{i=1}^{j} (n_i+\tau_i)$ for each $j \geq 1$.
\end{proposition}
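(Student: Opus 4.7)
My plan is to construct the shadowing point $\hat z$ by descending induction along the segments $(\hat x_\ell, n_\ell),(\hat x_{\ell-1}, n_{\ell-1}), \ldots, (\hat x_1, n_1)$, using two ingredients. First, the defining condition of $\hat\GGG$ converts each $(\hat x_j, n_j)$ into a geometrically contracting inverse branch of $g^{n_j}$ based at $g^{n_j}\hat x_j$: pick $\epsilon_0 \in (0,\epsilon)$ smaller than the injectivity scale of the covering $\mathcal P$ in (H2); then for every $k \in\{1,\ldots,n_j\}$ the hypothesis gives that at most an $\alpha$-fraction of the points $g^{n_j-k}\hat x_j,\ldots,g^{n_j-1}\hat x_j$ lies in $\Omega_\rho$, and by (H1) (using $L(y) \leq L$ when $y \in \Omega$ and $L(y) < \lambda_u$ otherwise, together with $L > \lambda_u$) the product of the Lipschitz constants of the associated local inverses is at most $L^{\alpha k}\lambda_u^{(1-\alpha)k}=\theta_\alpha^k$. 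Since $\theta_\alpha \in (0,1)$ by the choice \eqref{eq.alpha}, a standard induction keeping preimages inside the injective domains of $\mathcal P$ yields: for every $\hat v \in B(g^{n_j}\hat x_j,\epsilon_0)$ there is a unique $\hat y$ with $g^{n_j}\hat y = \hat v$ and
\[
d(g^m\hat y, g^m\hat x_j) \leq \theta_\alpha^{n_j-m}\epsilon_0 \leq \epsilon \quad\text{for every } 0 \leq m \leq n_j.
\]
Second, compactness of $N$ together with topological exactness of $g$ produces a uniform $\tau = \tau(\epsilon_0) \in \NN$ such that $g^\tau(B(\hat y,\epsilon_0)) = N$ for every $\hat y \in N$.

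With these tools in hand the construction is immediate. Set $\hat y_\ell := \hat x_\ell$. Given $\hat y_{j+1}$, use topological exactness to pick $\tau_j \leq \tau$ and $\hat v_j \in B(g^{n_j}\hat x_j,\epsilon_0)$ with $g^{\tau_j}(\hat v_j) = \hat y_{j+1}$, then let $\hat y_j$ be the preimage of $\hat v_j$ supplied by the contracting inverse branch above. Putting $\hat z := \hat y_1$, one sees by construction that $\hat y_j = g^{r_{j-1}}(\hat z)$ with $r_j$ as in the proposition, and the displayed inequality yields the claimed bound on $d(g^m\hat x_j, g^{m+r_{j-1}}\hat z)$.

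The main technical obstacle I anticipate is coordinating the choice of $\epsilon_0$: it must simultaneously be smaller than every injective neighborhood in $\mathcal P$ (so the composed inverse branches are well-defined and their drift along the orbit is controlled by the convergent geometric series $\sum_k\theta_\alpha^k\epsilon_0$) and a scale at which topological exactness provides a uniform return time. Both requirements follow from compactness of $N$, but getting the quantifier order right is what makes the argument precise.
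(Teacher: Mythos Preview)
Your proposal is correct and follows essentially the same approach as the paper: both extract the $\theta_\alpha$-contracting inverse branches of $g^{n_j}$ along segments in $\hat\GGG$ from (H1) and the definition of $\GGG$, invoke topological exactness of $g$ for a uniform transition time $\tau$, and build the shadowing point by a descending induction from the last segment to the first. Two cosmetic points: your induction should terminate at $\hat z=\hat y_0$ (not $\hat y_1$) so that the first segment $(\hat x_0,n_0)$ is also shadowed, and you should include $\epsilon_0\leq\rho$ in your choice of scale (as the paper does with $\delta_0=\min\{\delta_1,\rho\}$) so that the contraction estimates apply on the whole $\epsilon_0$-ball.
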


\begin{proof} 
We know there exists some $\delta_0>0$ such that if $\hat x\in N$ is a point where$(\hat x,n)\in \hat \GGG$, then the inverse branch $g^{-j}_{\hat x}$ of $g^{n-j}$ that sends $g^{n}(\hat x)$ to $g^{n-j}(\hat x)$ is a $(\theta_\alpha)^j$-contraction on the ball of radius $\delta_0$, for $1\leq j \leq n$.  To see the existence of such a $\delta_0$ let $\mathcal{P}$ be a finite open cover of $N$ such that each element of $\mathcal{P}$ is an open set on which $g$ has an invertible branch.  Now let $\delta_1$ be a Lebesgue number for the open cover, and choose $\delta_0=\min\{ \delta_1, \rho\}$.

%Since the Lipschitz constant of any inverse branch of $g$ is uniformly bounded from above, taking $1>\theta>\lambda$ and $n_0(k)$ big enough, we have that the inverse branches $g^{-j}_x$ of $g^{n-j}$ that sends $g^{n+k}(\hat x)$ to $g^{n-j+k}(\hat x)$ is a $\theta^j$-contraction for $1\leq j \leq n+k$. 

%\tf{Check on this hypothesis and make sure it is added}
By hypothesis, we know that given $\epsilon>0$, there exists a $\tau=\tau(\epsilon)\in \mathbb{N}$ such that for any 
%point $\hat x\in N$ we have $g^j(B_\epsilon(\hat x))=N$ for some $j\leq \tau$.
%$\delta>0$ sufficiently small and any 
two points $\hat y_1,\hat y_2\in N$, there exists some $j\leq \tau$  such that $B_\epsilon(\hat y_2)\subset g^j(B_\epsilon(\hat y_1))$. 

%Let $\{(x_j,n_j)\}\subset \hat \GGG$. Then, we have that the inverse branch $g^{-(n_j)}_{x_j}$ of $g^{n_j+k}$ that sends $g^{n_j+k}(x_j)$  on $x_j$ is a $\theta^{n_j}$-contraction for $1\leq j \leq n_j+k$. 

Fix $\epsilon>0$ and let $\delta>0$ be less then $\min\{ \epsilon, \delta_0\}$.  Fix $\tau=\tau(\delta)$.  Let $\{(x_j, n_j\}_{j=0}^\ell\subset \hat \GGG$.  Then we know there exists a set of points $X_{\ell-1}\subset B_\delta(g^{n_{\ell-1}}(\hat x_{\ell -1}))$ and $\tau_{\ell}\leq \tau$ such that $g^{\tau_\ell}(X_{\ell-1})$ is the image of $B_\delta(g^{n_\ell}(\hat x_\ell))$ by the inverse branch $g^{-n_\ell}_{g^{n_\ell}(\hat x_\ell)}$.

Similarly, there exists a nonempty set $X_{\ell-2}\subset B_\delta(g^{n_{\ell-2}}(\hat x_{\ell -2}))$ and $\tau_{\ell-1}\leq \tau$ such that $g^{\tau_{\ell-1}}(X_{\ell-1})$ is the image of $X_{\ell-1}$ by the inverse branch $g^{-n_{\ell-1}}_{g^{n_{\ell-1}}(\hat x_{\ell-1})}$.

Continuing inductively we see that $X_0$ is nonempty and we pick $\hat z\subset X_0$.  This proves that $\hat \GGG$ has specification at any scale for $g$.

%Taking $n_0$ big enough, we may assume that $\theta^{n_0}\epsilon<\delta$. Denoting by $\tau_{j-1} \leq \tau$ the number such that $B_\delta(\hat x_j)\subset g^{\tau_{j-1}}(B_\epsilon(g^{n_{j-1}+k}(\hat x_{j-1}))),$ for $j=1,\dots,l$. Then, taking the corresponding compositions of the inverse branches of order $n_j+k$ at $g^{n_j+k}(\hat x_j)$ with the inverse branches of order $\tau_{j-1}$ at $g^{\tau_{j-1}+n_{j-1}+k}(\hat x_{j-1})$, we know there exists a point in the  set 
%   $$ \bigcap_{j=1}^l g^{-r_{j-1}}\big(B_{n_j+k}(x_j, \epsilon)\big),$$
%where $r_{0}=0$ and $r_j = \sum_{i=1}^{j} (n_i+\tau_i)+jk$ for each $j \geq 1$, as we wish to prove.

\end{proof}

Using the above result we now show that $(\Lambda, f)$ has the specification property for sufficiently small scales for the set $\GGG$.

\begin{proposition}\label{specforg} Given $\epsilon$, there exist $\tau=\tau(\epsilon)$  such that if  %$\hat x_j\in N$ 
$\{( x_j,n_j)\}_{j=0}^\ell \subset \GGG$, then there exists $\tau_j \leq \tau$ for $j=1,\dots,\ell$ and  some $\hat z \in N$ such that

$$
d(f^{m}( x_j),f^{m+r_{j-1}}( z)) \leq \epsilon,
$$ for $0\leq m \leq n_j$, where $r_{0}=0$ and $r_j = \sum_{i=1}^{j} (n_i+\tau_i)$ for each $j \geq 1$.
\end{proposition}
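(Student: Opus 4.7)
The strategy is to project to the base $(N,g)$, apply a suitable refinement of the preceding base-level proposition (modified so that transitions are aligned with the pre-histories of the $x_j$'s in $\Lambda$), and then lift to $\Lambda$ using the fiber contraction (H3) and the holonomy structure (H4).

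Fix $\epsilon>0$, let $C$ be the constant from (H4), and set $\epsilon'=\epsilon/(3C)$. Choose $K\in\NN$ so that $\lambda_s^K D\le \epsilon/(3C)$, where $D$ is an upper bound for the diameters of the fibers $M_y$. The first step is to refine the base construction: while applying the preceding proposition to the projected collection $\{(\hat x_j,n_j)\}_{j=1}^\ell\subset\hat\GGG$, I would design each transition from $g^{n_{j-1}}(\hat x_{j-1})$ to $\hat x_j$ of length $\tau_{j-1}$ in two pieces — first use topological exactness of $g$ to reach a small neighborhood of $\pi(f^{-K}(x_j))$ in at most $\tau^*$ iterates, and then follow the backward orbit of $x_j$ for $K$ further iterates to land at $\hat x_j$. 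This produces $\hat z\in N$ and $\tau_{j-1}\le \tau^*+K$ satisfying, in addition to the usual shadowing $d_N(g^m(\hat x_j),g^{m+r_{j-1}}(\hat z))\le\epsilon'$, the pre-history alignment $d_N(g^{r_{j-1}-k}(\hat z),\pi(f^{-k}(x_j)))\le\epsilon'$ for $1\le k\le K$.

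Next I lift $\hat z$ to $z:=h_{\hat x_1,\hat z}(x_1)\in M_{\hat z}\cap\Lambda$ using the holonomy from (H4). The $f$-invariance of the holonomy gives exact fiber matching along the $j=1$ segment, so $d_M(f^m(x_1),f^m(z))\le C\epsilon'<\epsilon$ for $0\le m\le n_1$. For $j\ge 2$ and $0\le m\le n_j$, (H4) gives
\begin{equation*}
d_M\bigl(f^m(x_j),f^{m+r_{j-1}}(z)\bigr)\le C\Bigl[\epsilon'+d_M\bigl(h_j(f^{m+r_{j-1}}(z)),\,f^m(x_j)\bigr)\Bigr],
\end{equation*}
where $h_j$ is the holonomy from $M_{g^{m+r_{j-1}}(\hat z)}\cap\Lambda$ to $M_{g^m(\hat x_j)}\cap\Lambda$. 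In the fiber $M_{g^m(\hat x_j)}\cap\Lambda$, the two points $h_j(f^{m+r_{j-1}}(z))$ and $f^m(x_j)$ have pre-histories that agree up to depth $m+K$: the first $m$ entries coincide by the base shadowing (which forces the pre-image branches of $h_j(f^{m+r_{j-1}}(z))$ to be $g^{m-k}(\hat x_j)$ for $k\le m$), and the next $K$ entries match by the pre-history alignment built into the modified transition. By the solenoidal structure of the fibers of $\Lambda$ (each deeper cylinder is $\lambda_s$-contracted), this forces the fiber distance to be at most $\lambda_s^{m+K}D+O(\epsilon')\le\epsilon/(3C)$. Combining, $d_M(f^m(x_j),f^{m+r_{j-1}}(z))<\epsilon$, which gives the desired specification at scale $\epsilon$ with $\tau:=\tau^*+K$.

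The hard part will be the first step: constructing the modified $\hat z$ whose transitions realize the prescribed pre-histories of the $x_j$'s. This requires extending the topological-exactness argument in the preceding proposition so that the inverse branches of $g^{\tau_{j-1}}$ can be chosen to track the backward orbit of $x_j$ for the last $K$ steps, while the Lipschitz-inverse-branch condition ensures that these branches remain sufficiently contracting on small neighborhoods for the shadowing estimates to close up.
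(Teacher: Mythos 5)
Your strategy is essentially the correct one, and it aligns with what the paper actually does once you look beyond the very terse main-text proof. The published proof merely says to combine the base specification (the preceding proposition for $g$) with the fiber contraction from (H3), taking $\tau = \max\{\hat\tau,\tau_s\}$, but as written this does not explain how the fiber coordinate of $f^{r_{j-1}}(z)$ is made to agree with that of $x_j$ for $j\geq 2$: the base specification controls the \emph{forward} orbit of $\hat z$ during each segment, not the pre-history of $\hat z$ entering the $j$-th segment, and the pre-history is exactly what determines the fiber coordinate on the solenoidal attractor. You put your finger on this, and your remedy --- arranging the last $K$ steps of each transition so the base orbit of $\hat z$ tracks $\pi(f^{-K}x_j),\dots,\pi(f^{-1}x_j),\hat x_j$, then invoking fiber contraction at depth $K$ --- is precisely what the paper's own supporting argument does, in the language of the natural extension $(\hat N,\hat g)$: there the base specification is applied to the shifted segments $(x^j_k,n_j+k)$, which makes the last $k$ pre-history terms of the shadowing point automatically match those of $\hat x^j$, and the $2^{-k}$-weighting of the extension metric plays the role of your $\lambda_s^K$. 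So this is not a genuinely different route; it is the same route, carried out directly on $\Lambda$ rather than through the topological conjugacy $h:\Lambda\to\hat N$.

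One point you should tighten when you formalize the hard step: the last $K$ iterates of your transition retrace $x_j$'s pre-history, but $x_j$'s pre-history carries no $\GGG$-type constraint, so the corresponding inverse branches of $g$ are only $L$-Lipschitz (possibly expanding), not $\theta_\alpha$-contracting. Since $K$ is a fixed number depending only on $\epsilon$, this is not fatal --- you can shrink the target neighborhood in the topological-exactness step by a factor $L^{-K}$ so that after $K$ possibly-expanding backward steps you still land within $\epsilon'$ of $\hat x_j$ --- but it needs to be said, and it forces $\tau^*$ to depend on $\epsilon'/L^K$ rather than $\epsilon'$ alone. The same issue is present, implicitly, in the paper's treatment of $(x^j_k,n_j+k)$; so your proposal is in good company, and with this adjustment it closes correctly.
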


\begin{proof}
From Proposition \ref{specforg} we know that we have specification when we project to $g$.  The problem is the fiber direction.  To handle this we notice by the uniform contraction on the fiber and the fact that the fibers have a finite diameter that given $\epsilon>0$ there exists some $\tau_s(\epsilon)=\tau_s$ such that for each $x\in \Lambda$ we know there exists some $j\leq \tau_s$ such that for each $k\geq j$ we have $M_{\hat f^k(x)}\subset f^{-k}(B_\epsilon(x)\cap M_{\hat x})$. 

We now denote $\hat \tau(\epsilon)$ to be the constant for specification given by the previous result.  We let $\tau(\epsilon)=\max\{ \hat \tau(\epsilon), \tau_s(\epsilon)\}$.  We see that this gives the desired result by combining the result from the previous proposition with the fiber properties listed above.
%\tf{should we add more details?}
\end{proof}

\subsection{Bowen property for the partially hyperbolic attractor} %Let $\ph:\Lambda\to \mathbb{R}$ be a H\"older continuous function.  

%$$\theta_\alpha=(\log L)^\alpha (\log \lambda_u^{-1})^{1-\alpha}\in (0,1).$$

%From the fact that the stable and center-unstable foliations are continuous, transverse, tangent to the splitting on $\Lambda$, and the center-unstable foliation is contained in $\Lambda$ we know there exists a local product structure.  So there exists some $\eta_0>0$ and $\kappa\geq 1$ such that if $\eta\in (0, \eta_0)$ and $d(x,y)<\eta$ where $x,y\in \Lambda$, then $W^s_\eta(x)\cap W^{cu}_\eta(y)$ consists of a single point in $z\in \Lambda$ and furthermore we know that $d(x, z)<\kappa \eta$ and $d(y,z)<\kappa \eta$.  We may assume that $\eta<\rho$.

From property (H4) we know that if $d_M(x,y)<\rho/C$, then $d_N(\hat x, \hat y)<\rho$.  Also, if $\eta>0$ is sufficiently small and %Hence if 
$\hat x\in \Omega_\rho^c$ and $d_N(\hat x, \hat y)<\eta$ %and $\rho$ is sufficiently small
then for each preimage $\hat x_1\in g^{-1}(\hat x)$ there exists a unique $\hat y_1\in g^{-1}(\hat y)$ such that $d(\hat x_1, \hat y_1)\leq \lambda_u d(\hat x, \hat y)$.

\begin{lemma}\label{l.productestimates}
If $\eta>0$ is sufficiently small, $(x,n)\in \GGG$, and $y\in B_n(x, \eta)$, then 
$$
d(f^k x, f^ky)\leq C \eta(\theta_\alpha^{n-k} + \lambda_s^k)
$$
for all $0\leq k\leq n$.
\end{lemma}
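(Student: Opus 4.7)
The plan is to split the distance $d_M(f^k x, f^k y)$ into a base component (in $N$) and a fiber component (in a stable leaf) via the holonomy decomposition (H4), bound each separately, and then recombine. Concretely, taking $\eta$ so small that (H4) applies along the orbit, write
\[
d_M(f^k x, f^k y) \;\leq\; C\bigl[\,d_N(g^k \hat x, g^k \hat y) + d_M\bigl(h_{g^k\hat x,\,g^k\hat y}(f^k x),\, f^k y\bigr)\,\bigr].
\]
I will control the first term by the good-times property of $\GGG$ (pullback in $N$) and the second by the uniform stable contraction (H3).

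For the fiber term, observe that the $f$-invariance of the holonomies gives $h_{g^k\hat x,\,g^k\hat y}(f^k x) = f^k\bigl(h_{\hat x,\hat y}(x)\bigr)$, and both $h_{\hat x,\hat y}(x)$ and $y$ lie in the same fiber $M_{\hat y}$. Iterating (H3) $k$ times yields
\[
d_M\bigl(h_{g^k\hat x,\,g^k\hat y}(f^k x),\,f^k y\bigr) \;\leq\; \lambda_s^{\,k}\, d_M\bigl(h_{\hat x,\hat y}(x), y\bigr) \;\leq\; C\lambda_s^{\,k}\,\eta,
\]
where the final inequality comes from the other half of (H4) applied to the initial pair $x,y$, noting $d_M(x,y)<\eta$.

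For the base term, I propagate the bound backwards from time $n$. Since $y\in B_n(x,\eta)$, $d_M(f^n x,f^n y)<\eta$, hence $d_N(g^n\hat x,g^n\hat y)\leq C\eta$. Now I use the remark preceding the lemma iteratively: for each pullback step from $g^{n-i}\hat x$ to $g^{n-i-1}\hat x$, provided $\eta$ is small enough to stay in the good neighborhood, the image $\hat y$-branch exists and the pullback is a contraction by factor at most $\lambda_u$ when $g^{n-i}\hat x\in\Omega_\rho^c$, and by factor at most $L$ otherwise. Applying this $n-k$ times, the total contraction is at most
\[
L^{\#\{i\,:\,g^{i}\hat x\in\Omega_\rho\}}\cdot \lambda_u^{\#\{i\,:\,g^{i}\hat x\notin\Omega_\rho\}}
\]
over the index range controlling the pullbacks from $g^n\hat x$ down to $g^k\hat x$. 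The definition of $\GGG$ applied with $j=n-k$ guarantees that this first count is strictly less than $\alpha(n-k)$, so the total contraction factor is at most $L^{\alpha(n-k)}\lambda_u^{(1-\alpha)(n-k)} = \theta_\alpha^{\,n-k}$. Hence $d_N(g^k\hat x, g^k\hat y)\leq C\eta\,\theta_\alpha^{\,n-k}$.

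Combining the two estimates (and absorbing constants into $C$) yields the claimed bound $d_M(f^k x, f^k y)\leq C\eta(\theta_\alpha^{\,n-k}+\lambda_s^{\,k})$. The main subtlety is verifying that the inductive pullback really stays in the small-$\eta$ regime where a unique nearby preimage exists at every step: one has to check that the worst-case distance encountered during the backwards propagation from $g^n\hat x$ to $g^k\hat x$ remains below the threshold in the preamble remark. Because pullback is contractive at each step, choosing $\eta$ so that $C\eta$ is below the remark's threshold suffices; this is the only place where the smallness of $\eta$ is essentially used, and it is uniform in the orbit segment $(x,n)\in\GGG$.
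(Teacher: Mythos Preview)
Your argument is correct and follows essentially the same route as the paper: split via (H4) into a base distance in $N$ and a fiber distance along a stable leaf, control the fiber piece by $f$-invariance of the holonomies together with (H3), and control the base piece by pulling back from time $n$ using the tail condition defining $\GGG$. The paper's proof is terser but uses the same two ingredients.

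One small inaccuracy in your closing paragraph: the pullback in $N$ is \emph{not} contractive at each individual step, since a step through $\Omega_\rho$ may expand by up to $L>1$. The reason the inductive pullback nonetheless stays below the threshold is that the $\GGG$ condition bounds the \emph{cumulative} factor over every terminal segment of length $j$ by $\theta_\alpha^{\,j}<1$; hence at each intermediate time $n-j$ one already has $d_N(g^{n-j}\hat x,g^{n-j}\hat y)\leq \theta_\alpha^{\,j}\,C\eta\leq C\eta$, and the next pullback step is legitimate. Your conclusion and choice of $\eta$ are correct; only the phrase ``contractive at each step'' should be replaced by this tail-segment argument.
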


\begin{proof}
Since $(x,n)\in \GGG$ and $y\in B_n(x,\eta)$ we know for all $j\in \{ 1,..., n\}$ that we have
 $$
 \frac 1 j \sum_{i=n-j}^{n-1}\chi_{\Omega}(\pi(f^{i}(y)))<\alpha.
 $$ 
 %Let $z_k\in W^s_\eta(f^kx)\cap W^{cu}_\eta(f^ky)$.  
By the $f$-invariance of the holonomy we have $h_{\widehat{f^kx}, \widehat{f^ky}}=f^k(h_{\hat x, \hat y})$.  So 
$$d_M(h_{\widehat{f^kx}, \widehat{f^ky}}, f^ky)< \lambda_s^k C\eta$$ 
for all $k\in \{0,..., n\}$.   Also, we know that
$$
d_N(\widehat{f^k x}, \widehat{f^k y})\leq \theta_\alpha^{n-k}  C\eta$$
%From Lemma 6.6 in \cite{CFT_BV} we know that 
% $$
% d(z_k, f^k y)\leq \theta_\alpha^{n-k}d(z_n, f^ny)\leq \theta_\alpha^{n-k}\kappa \eta
% $$
 for all $k\in \{0, ..., n\}$.  
 %We also have the following:
 %$$
 %d(f^k x, z_k)\leq \theta_\alpha^k d(x, z_0)\leq \theta_\alpha^k \kappa \eta
 %$$
 
\end{proof}

\begin{lemma}\label{l.BowenG}
Any H\"older continuous potential has the Bowen property on $\GGG$ at scale $\eta$ for $\eta$ sufficiently small.
\end{lemma}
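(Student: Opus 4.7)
The plan is to combine the geometric estimate of Lemma~\ref{l.productestimates} with the H\"older condition on $\ph$ to produce a bound on the Birkhoff sum difference that is independent of the length $n$ of the orbit segment.

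Let $\ph$ be H\"older continuous with exponent $\gamma \in (0,1]$ and constant $K>0$, so that $|\ph(u)-\ph(v)| \le K\, d_M(u,v)^\gamma$ for all $u,v\in M$. Fix $\eta$ small enough that Lemma~\ref{l.productestimates} applies, take $(x,n)\in\GGG$ and $y\in B_n(x,\eta)$. Then I would simply estimate
\[
|S_n\ph(x) - S_n\ph(y)| \;\le\; \sum_{k=0}^{n-1} K\, d_M(f^k x, f^k y)^\gamma \;\le\; K(C\eta)^\gamma \sum_{k=0}^{n-1}\bigl(\theta_\alpha^{n-k} + \lambda_s^{k}\bigr)^\gamma,
\]
using Lemma~\ref{l.productestimates} for the second inequality. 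Applying the elementary inequality $(a+b)^\gamma \le a^\gamma + b^\gamma$ (valid for $\gamma\in(0,1]$ and $a,b\ge 0$), this is bounded by
\[
K(C\eta)^\gamma \sum_{k=0}^{n-1}\bigl(\theta_\alpha^{\gamma(n-k)} + \lambda_s^{\gamma k}\bigr).
\]

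The two geometric series are each bounded by $\frac{1}{1-\theta_\alpha^\gamma}$ and $\frac{1}{1-\lambda_s^\gamma}$ respectively, since $\theta_\alpha,\lambda_s \in (0,1)$. This yields
\[
|S_n\ph(x) - S_n\ph(y)| \;\le\; K(C\eta)^\gamma \left(\frac{1}{1-\theta_\alpha^\gamma} + \frac{1}{1-\lambda_s^\gamma}\right),
\]
a constant independent of $n$, $x$, and $y$. Taking the supremum over $(x,n)\in\GGG$ and $y\in B_n(x,\eta)$ gives $V(\GGG,\ph,\eta)<\infty$, which is the Bowen property at scale $\eta$.

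I do not anticipate any real obstacle here: the content of the lemma is entirely packaged into Lemma~\ref{l.productestimates}, whose two-sided exponential decay estimate (coming from the center-unstable contraction along good segments and the stable contraction in the fibers) is exactly what is needed to make the H\"older estimate telescope into a summable series. The only small care required is verifying that $\eta$ may indeed be chosen small enough for Lemma~\ref{l.productestimates} to apply simultaneously with the H\"older inequality on the resulting Bowen balls, which just requires $C\eta$ to be less than the injectivity scale on which the H\"older bound was stated.
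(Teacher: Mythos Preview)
Your proof is correct and follows essentially the same approach as the paper: apply Lemma~\ref{l.productestimates} to the H\"older increment at each time $k$ and sum the resulting geometric series. The only cosmetic difference is that you use the subadditivity inequality $(a+b)^\gamma \le a^\gamma + b^\gamma$ and bound the two series separately, whereas the paper bounds $(a+b)^\beta \le 2^\beta \max(a,b)^\beta$ and sums a single geometric series in $\max\{\theta_\alpha,\lambda_s\}^\beta$; both yield an $n$-independent constant.
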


\begin{proof}
We know there exists some $K>0$ and $\beta\in (0,1)$ such that 
$$
|\ph (x)-\ph (y)|\leq K d(x,y)^\beta$$
for all $x,y\in \Lambda$.

Given $(x,n)\in \GGG$ and $y\in B_n(x,\eta)$ we see
$$
\begin{array}{llll}
|S_n\ph (x) - S_n \ph(y)| & \leq K\sum_{k=0}^{n-1} d(f^k x, f^k y)^\beta\\
& \leq KC  \eta^\beta \sum_{k=0}^{n-1}(\theta_\alpha^{n-k} + \lambda_s^k)^\beta\\
& \leq 2^{\beta +1} C K \eta^\beta \sum_{j=0}^\infty (\max\{\theta_\alpha, \lambda_s\})^{j\beta}
%(\theta_\alpha^{j\beta} + \lambda^{j\beta})
=: V<\infty.
\end{array}
$$

\end{proof}

%
%\subsection{Bowen property for the natural extension} Now, we check that $\hat{\GGG}$ has the Bowen property at any scale. Observe that given any $\hat x, \hat y \in \hat N$, by the H\"older continuity of $\hat \phi$ we have that there exists constants $C>$ and $\alpha$ such that 
%$$
% |S_n \hat\phi (x) - S_n \hat \phi(y)|\leq \sum\limits_{i=0}^{n-1} C\hat d(\hat{g}^i(\hat x),\hat{g}^i(\hat y))^\alpha.
%$$
%
%If $(\hat x, n) \in \hat \GGG$, writing $\hat x = (x_0,x_1,\dots)$ and $\hat y = (y_0,y_1,\dots) \in B_n(\hat x, \epsilon) $ we choose $\hat z =(z_0,z_1,\dots)$ as $z_0=y_0$ and $z_i = g^{-i}_{x_i}(y_i)$, where $g^{-k}_p$ stands for the inverse branch of $g$ that sends $g^k(p)$ to $p$.     
%
%Since $y_0=z_0$, we have that for every $i\geq 0$:
%$$
%d(\hat{g}^i(\hat y),\hat{g}^i(\hat z)) \leq 2^{-i} d(\hat y,\hat z).
%$$
%On the other hand, since 
%$$
%d(\hat{g}^i(\hat x),\hat{g}^i(\hat z)) \leq 2^{-i} d(\hat x,\hat z).
%$$
%
%

 \subsection{Pressure estimates of the nonexpansive points.}
 
% We now show that the pressure of obstructions to expansivity can be estimated using the pressure on $\SSS$ for the attractor $\Lambda$.
% 
% \begin{theorem}\label{t.entropybad}
% Under the assumptions above we have
% $$
% h(\SSS)\leq \log q + \epsilon(\alpha) + m\log L.
% $$
% \end{theorem}
%

%\subsection{Obstructions to expansivity}

The diffeomorphism $f:\Lambda \to \Lambda$ satisfies the given property.
\begin{enumerate}\label{propertyE}
\item[[E]] there exist $\epsilon >0$ 
such that 
for $x \in M$, if there exists a sequence $n_k\to\infty$ with $\frac{1}{n_k}\sum_{i=0}^{n_k-1}\chi_{\Omega_\rho}(\pi(f^i(x))) \leq \alpha$, then $\Gamma_\eps(x)=\{x\}$.
\end{enumerate}

Then as in Theorem 3.4 in \cite{CFT_BV} we know the following holds.

\begin{theorem}\label{t.pressureofexpansiveobstructions}
If $f$ satisfies [E], then we have $\Pexp(\ph,\eps) \leq P(\SSS, \ph)$.
\end{theorem}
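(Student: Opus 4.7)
The plan is to reduce the bound on $\Pexp(\ph,\eps)$ to a Katok-type partition-sum estimate on the collection $\SSS$. The key observation is that property [E] forces every non-expansive orbit to eventually satisfy the defining inequality of $\SSS$, so any ergodic measure charging $\mathrm{NE}(\eps)$ must be supported (asymptotically in $n$) on orbit segments in $\SSS$.

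First I would take an arbitrary $\mu\in\mathcal{M}_e(f)$ with $\mu(\mathrm{NE}(\eps))>0$. Since $\mathrm{NE}(\eps)$ is $f$-invariant, ergodicity gives $\mu(\mathrm{NE}(\eps))=1$. The contrapositive of [E] says that if $x\in\mathrm{NE}(\eps)$ then there is no sequence $n_k\to\infty$ along which $\frac{1}{n_k}\sum_{i=0}^{n_k-1}\chi_{\Omega_\rho}(\pi(f^i x))\le\alpha$, so for each such $x$ there exists $N(x)\in\NN$ with $(x,n)\in\SSS$ for every $n\ge N(x)$. Setting $Z_N:=\{x\in\mathrm{NE}(\eps):(x,n)\in\SSS \text{ for all }n\ge N\}$ produces an increasing family with $\mu\bigl(\bigcup_N Z_N\bigr)=1$, so some $N_0$ satisfies $\mu(Z_{N_0})>0$.

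Next, for every $\eps'>0$ and every $n\ge N_0$, any $(n,\eps')$-separated subset $E\subset Z_{N_0}$ is automatically a $(n,\eps')$-separated subset of $\SSS_n$, so $\sum_{x\in E}e^{S_n\ph(x)}\le \Lambda^{\mathrm{sep}}_n(\SSS,\ph,\eps')$. Taking the supremum over $E$ and then $\limsup_n \frac{1}{n}\log$ yields $P(Z_{N_0},\ph,\eps')\le P(\SSS,\ph,\eps')\le P(\SSS,\ph)$. Now the Katok-type pressure bound---for an ergodic $\mu$ and any measurable $Z$ with $\mu(Z)>0$, $h_\mu(f)+\int\ph\,d\mu\le\lim_{\eps'\to 0}P(Z,\ph,\eps')$---delivers $h_\mu(f)+\int\ph\,d\mu\le P(\SSS,\ph)$. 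Taking the supremum over ergodic $\mu$ with $\mu(\mathrm{NE}(\eps))>0$ completes the proof.

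The main obstacle will be establishing the Katok-type bound cleanly: one must combine Katok's classical entropy formula for ergodic measures with the Birkhoff ergodic theorem applied to $\ph$ in order to convert the integral $\int\ph\,d\mu$ into Birkhoff sums $S_n\ph$ that can be absorbed into the partition sum. This is precisely the argument carried out in the proof of Theorem~3.4 of \cite{CFT_BV}, and adapting it here amounts to careful bookkeeping between the fixed scale $\eps$ supplied by [E] and the small scales $\eps'$ used to compute $P(\SSS,\ph)$.
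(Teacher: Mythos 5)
Your proof is correct and follows the same route the paper relies on (it simply cites Theorem~3.4 of \cite{CFT_BV} for this step): use property [E] in contrapositive form to write $\mathrm{NE}(\eps)$ as the increasing union $\bigcup_N Z_N$ of sets on which every orbit segment of length $\geq N$ lies in $\SSS$, pick $N_0$ with $\mu(Z_{N_0})>0$, observe that $(n,\eps')$-separated sets in $Z_{N_0}$ sit inside $\SSS_n$ for $n\geq N_0$, and finish with the Katok pressure bound for sets of positive ergodic measure. The one ingredient you flag as the ``main obstacle'' --- that $\mu$ ergodic and $\mu(Z)>0$ imply $h_\mu(f)+\int\ph\,d\mu\le \lim_{\eps'\to 0}P(Z,\ph,\eps')$ --- is indeed the crux and is proved exactly as you describe, via Brin--Katok plus Birkhoff on an Egorov set inside $Z$, which is precisely the lemma used in \cite{CFT_BV}.
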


We note that  \cite[Lemma 5.9]{CFT_Mane} or  \cite[Lemma 6.12]{CFT_BV} can be easily modified to show that $f|_{\Lambda}$ satisfies property [E].

%\TF{Bowen property won't be hard once we have a product structure on the foliations.  To do this we will need some properties on the partial hyperbolicity.  Do you have another idea Krerley on how to prove this?}

\section{Proof of theorem \ref{t.unique} and theorem \ref{t.variation}}\label{s.proofs}

From the previous section we have a decomposition of the dynamics where the set $\GGG$ has specification and the Bowen property, and the pressure on the set $\SSS$ is an upper bound for the pressure of the obstructions to expansivity.
To conclude the proofs for Theorems \ref{t.unique} and \ref{t.variation}  using Theorem \ref{t.generalM} we need to estimate the pressure on $\SSS$.  First, we find the entropy of $f:\Lambda\to \Lambda$.

\begin{proposition}\label{p.entropyattractor}
$h_\mathrm{top}(f|_\Lambda)=h_\mathrm{top}(g)=\log \deg(g).
$
\end{proposition}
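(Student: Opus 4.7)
The plan is to prove the two equalities $h_\mathrm{top}(f|_\Lambda) = h_\mathrm{top}(g)$ and $h_\mathrm{top}(g) = \log \deg(g)$ separately, using the semiconjugacy $\pi: \Lambda \to N$ for the first and the structure of $g$ as a topologically exact local diffeomorphism for the second.

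For the first equality, the inequality $h_\mathrm{top}(f|_\Lambda) \geq h_\mathrm{top}(g)$ is immediate because $\pi$ is a continuous surjection intertwining $f$ and $g$. For the reverse inequality, I would apply Bowen's bound on the entropy of a factor map,
$$h_\mathrm{top}(f|_\Lambda) \leq h_\mathrm{top}(g) + \sup_{y \in N} h_\mathrm{top}\bigl(f,\, \pi^{-1}(y) \cap \Lambda\bigr),$$
and note that by hypothesis (H3), $f$ is a $\lambda_s$-Lipschitz contraction on each fiber $M_y = \pi^{-1}(y)$, so the topological entropy of $f$ restricted to any fiber vanishes. An equivalent, and perhaps cleaner, route is to exploit the remark in the excerpt that $(\Lambda, f)$ is topologically conjugate to the natural extension of $(N, g)$, together with the fact that the natural extension of a continuous map on a compact metric space preserves topological entropy.

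For the second equality, the lower bound $h_\mathrm{top}(g) \geq \log \deg(g)$ follows from a Misiurewicz--Przytycki style argument: for any $y_0 \in N$ the preimage set $g^{-n}(y_0)$ has exactly $\deg(g)^n$ points, and the Lipschitz inverse branch hypothesis together with the finite cover $\mathcal{P}$ by injective domains provides a uniform separation scale $\eps>0$ at which $g^{-n}(y_0)$ is $(n,\eps)$-separated. For the upper bound I would cite the existing entropy computation for this class of non-uniformly expanding maps in \cite{VV} (see also \cite{CastroNascimento}), which shows $h_\mathrm{top}(g) = \log \deg(g)$ under hypotheses (H1) and (H2).

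The main obstacle is the upper bound $h_\mathrm{top}(g) \leq \log \deg(g)$. A direct counting argument using the cover $\mathcal{P}$ from (H2) is delicate, since injective domains overlap and $g$ is only non-uniformly expanding, so a naive bound on $|\mathcal{P}_n| = |\bigvee_{i=0}^{n-1} g^{-i}\mathcal{P}|$ need not grow like $\deg(g)^n$. The Varandas--Viana framework circumvents this by working with separated sets over the bad region $\Omega_\rho$ (with density of visits less than $\alpha$) and using the bounded distortion of the inverse branches off $\Omega$, so I would invoke that result rather than redo the estimate here.
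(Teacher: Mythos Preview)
Your proposal is correct and follows essentially the same approach as the paper. The only cosmetic difference is that the paper invokes the Ledrappier--Walters formula (citing \cite{LW} and \cite{RV}) rather than Bowen's fiber-entropy inequality for the equality $h_\mathrm{top}(f|_\Lambda)=h_\mathrm{top}(g)$, and cites \cite{OV,VV} directly for $h_\mathrm{top}(g)=\log\deg(g)$ without sketching the lower bound; in both cases the key input is the same, namely that the fibers $M_y$ are uniformly contracted with bounded diameter and hence carry zero entropy.
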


\begin{proof}
We know that $g:N\to N$ has entropy $\log \deg (g)$, see for instance \cite{OV, VV}.  To see that $h_\mathrm{top}(f|_\Lambda)=h_\mathrm{top}(g)=\log \deg(g).
$ we can apply Ledrappier-Walters formula from \cite{LW} as in \cite{RV} to see that the entropy of $f$ restricted to the attractor is the same as the entropy of $g$.  This proves the result since the pre-image of the map $\pi$ is a uniformly contracting and has finite diameter.
\end{proof}

%We now estimate the entropy on $\SSS$.

We now estimate the entropy on $\SSS$. Using estimates about  $g$ and the fact that the semiconjugacy between $f$ and $g$  is contracting on each  fiber, we are able to prove the next result.

\begin{proposition}\label{p.entropyestimates} 
$h(\SSS;f) \leq \log q + \epsilon(\alpha) + m\log L.$
\end{proposition}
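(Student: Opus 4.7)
The plan is to descend via the semiconjugacy $\pi$ to the base map $g\colon N\to N$ and then invoke the entropy estimate of Lemma~3.1 of \cite{VV}. Let $\hat\SSS\subset N\times\NN$ be defined by $(\hat x,n)\in\hat\SSS$ iff $\frac{1}{n}\sum_{i=0}^{n-1}\chi_{\Omega_\rho}(g^i\hat x)\geq\alpha$; observe that $(x,n)\in\SSS$ forces $(\pi(x),n)\in\hat\SSS$.

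The first step will be to reduce from $\Lambda$ to $N$, i.e.\ to show $h(\SSS;f)\leq h(\hat\SSS;g)$. By (H3) the fibers $M_y=\pi^{-1}(y)$ have uniformly bounded diameter and are contracted by $f$ at rate $\lambda_s<1$, so given $\epsilon>0$ there exists a constant $K=K(\epsilon)$, independent of $n$, such that each $M_y\cap\Lambda$ admits a cover by $K$ $d_n^f$-balls of radius $\epsilon$ (after enough iterates, one ball suffices). Using (H4) to compare $d_M$ with $d_N$, any $(n,\epsilon)$-separated subset $E\subset\SSS_n$ projects under $\pi$ into a subset of $\hat\SSS_n$ which is $(n,\epsilon/C)$-separated under $g$, up to a $K$-fold fiber overcount. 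Hence $\Lambda^{\mathrm{sep}}_n(\SSS,0,\epsilon;f)\leq K\,\Lambda^{\mathrm{sep}}_n(\hat\SSS,0,\epsilon/C;g)$; passing to $\limsup\frac{1}{n}\log$ and letting $\epsilon\to 0$ yields the claimed inequality. This parallels the reduction used in Proposition \ref{p.entropyattractor}.

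The second step will be to bound $h(\hat\SSS;g)\leq\log q+\epsilon(\alpha)+m\log L$ directly on $N$, which is essentially Lemma~3.1 of \cite{VV} (see also \cite{OV,CastroNascimento}). One counts $(n,\delta)$-separated points in $\hat\SSS_n$ by combining three inputs: (a) a combinatorial bound using (H2), since only $q$ elements of the cover $\mathcal{P}$ meet $\Omega_\rho$, so itineraries with at least $\alpha n$ symbols inside $\Omega_\rho$ contribute at most $\binom{n}{\lceil\alpha n\rceil}\,q^{\alpha n}(\deg g)^{(1-\alpha)n}$ choices; (b) the observation that since the inverse branches outside $\Omega$ are $\lambda_u$-contractions with $\lambda_u<1$, distinct outside-itineraries cannot produce $(n,\delta)$-separated orbits, which allows one to absorb the $(\deg g)^{(1-\alpha)n}$ factor into a subexponential term; and (c) the Jacobian bound $|\det Dg^{-1}|\leq L^m$ at each of the at most $n$ iterates inside $\Omega_\rho$, contributing a factor $L^{mn}$ per itinerary. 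Stirling's formula bounds the binomial by $e^{n\epsilon(\alpha)}$ with $\epsilon(\alpha)\to 0$ as $\alpha\to 1$, and the claim follows after taking $\delta\to 0$.

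The main obstacle will be step (b): one must carefully use the strong $\lambda_u$-contraction of the inverse branches outside $\Omega$ to eliminate what would otherwise be an additional $(1-\alpha)\log\deg(g)$ term coming from naive itinerary counting. This is precisely what Lemma~3.1 of \cite{VV} handles, so the cleanest route is to set up the counting exactly as there and invoke that estimate as a black box.
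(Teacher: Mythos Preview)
Your reduction step (descend to $N$ via $\pi$) is in the right spirit, and the paper does something analogous, though in the opposite direction: rather than projecting a separated set for $f$ down to one for $g$, it takes a maximal $(n,\gamma)$-separated set $E_n\subset\hat\SSS_{ln}$ for $g^l$, lifts it to $F_n\subset\SSS_{ln}$ by placing $m_0$ points in each fiber, and shows $F_n$ is $(n,K\gamma)$-spanning for $f^l$. Your version (``project and overcount by $K$'') would need more care, since two $f$-separated points in nearby fibers need not have $g$-separated projections; the separation may come entirely from the fiber direction via the holonomy. The paper's lifting argument sidesteps this.

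The genuine gap is in your second step. You attribute the full bound $\log q+\epsilon(\alpha)+m\log L$ to Lemma~3.1 of \cite{VV}, but that lemma only yields $\#\mathcal{R}^{(n)}\le e^{(\log q+\epsilon(\alpha))n}$, i.e.\ it bounds the number of $n$-\emph{cylinders} of $\mathcal P$ meeting $\hat\SSS_n$ and contributes only the $\log q+\epsilon(\alpha)$ part. The additional $m\log L$ arises because a single $n$-cylinder need not be small: on iterates inside $\Omega$, inverse branches may \emph{expand} by $L$, so a cylinder can have diameter on the order of $L^n$ times the base scale and hence contain many $(n,\delta)$-separated points. The paper handles this with a second ingredient (Lemma~6.2 and Claim~2 of \cite{VV}, appearing here as Lemma~\ref{l.NNN}): one works with $g^l$, chooses a fine cover $\hat\QQQ_k$, and counts that each $\mathcal P^{(ln)}$-cylinder meets at most $C_0 L^{lm}$ elements of $\hat\QQQ_k$ at each step, yielding the factor $[C_0 L^{lm}]^n$; dividing by $l$ and sending $l\to\infty$ gives $m\log L$.

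Your items (b) and (c) misidentify both mechanisms. For (b), contraction of inverse branches outside $\Omega$ means \emph{forward} orbits expand there, so distinct outside-itineraries certainly \emph{can} be $(n,\delta)$-separated; there is no absorption of a $(\deg g)^{(1-\alpha)n}$ factor by that route. The correct reason the outside symbols cost nothing is that Lemma~3.1 of \cite{VV} counts nonempty cylinders directly. For (c), the $m\log L$ term is not a Jacobian bound on inverse branches inside $\Omega$; it is the dimension-$m$ covering count for possibly large cylinders described above. So ``invoke Lemma~3.1 as a black box'' does not close the argument; you need the second covering estimate as well.
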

To prove the proposition above we make use of some results and notation in Section 6 of \cite{VV}. The basic strategy is to make use of the estimates about the number of dynamical balls of $g^l$ that we need to cover $\hat \SSS_{ln}$, for $n$ big enough,  and make use of the contraction in the fibers to obtain a similar estimate for $f^l$.

Let $\mathcal{P}$ be the partition from (H2) and let $\mathcal{P}=\{Q_0,..., Q_{j-1}\}$ where the first $q$ elements cover $\Omega$.
Denote by $\mathcal{P}^{(n)}$ the set of $n$-cylinders of $g$ by elements in $\mathcal{P}$, that is, 
%\tf{I think we want $Q_{i_0}$ where $i_k\in \{0,..., j-1\}$.}
$$
\mathcal{P}^{(n)}=\{Q_{i_0}\cap g^{-1}(Q_{i_1})\cap\cdots\cap g^{-(n-1)}(Q_{i_{n-1}}): Q_{i_k} \in \mathcal{P}, k\in \{0,\dots, n-1\}\}.
$$

Let  
$$
B(n,\alpha)=\{x\in N; \beta(x,n)\geq \alpha\}.
$$

%From now on, we fix $l$ large and  and take $n=lj$. 
If we denote by $\mathcal{R}^{(n)}$ the set of $n$-cylinders of $g$ by elements in $\mathcal{P}$ that intersects $B(n,\alpha-\delta)$,  for every $0<\delta<\alpha$, there exists $n_0\geq 1$ such that for $n\geq n_0$ the set 
$
\hat \SSS_{n}%=\{(\pi(y),n)\in N \times \mathbb{N}; (y,n)\in \SSS\}
$ is covered by the union of elements of $\mathcal{R}^{(n)}$ (see \cite[p. 578]{VV} for a proof). By the estimates of Lemma 3.1 of \cite{VV} or \cite{OV} 
%we can choose $\delta$ 
%\tf{do I have the quantifiers here in the correct order?  Do we need to introduce $\delta$ here?}
%small enough such that
%for $\epsilon>0$ there exists some $\alpha_0\in (0,1)$ such that for each $\alpha\in (\alpha_0, 1)$ we have
%$$
%\# \mathcal{R}^{(n)} \leq e^{(\log q + \epsilon)n}
%$$ 
%for sufficiently large $n$.  
%From the above we know 
there exists a function $\epsilon(\alpha)$ such that $\epsilon(\alpha)\to 0$ as $\alpha \to 1$ and for $\alpha$ given and sufficiently large $n$ we have 
$$
\# \mathcal{R}^{(n)} \leq e^{(\log q + \epsilon(\alpha))n}.
$$
%\tf{Is this enough said about $\epsilon(\alpha)$?}

From Lemma 6.2 in \cite{VV} if $D>0$, then there exists a $C_0>0$ and a sequence of open coverings $\{\hat \QQQ_k\}$ of $N$ such that $\mathrm{diam}(\hat \QQQ_k)\to 0$ as $k\to \infty$ and every set $E\subset N$ such that $\mathrm{diam}(E)\leq D \mathrm{diam}\hat  \QQQ_k$ intersects at most $C_0D^m$ elements of $\hat \QQQ_k$.

Fix $l>0$ and let $\{\QQQ_k\}$ be the lift of the open covers $\{\hat \QQQ_k\}$. 
Denote the set of $n$-cylinders of $g^l$ by elements in $\QQQ_k$ by $\mathcal{C}_{g^l,j} \QQQ_k$.
%, that is
%$$
% \mathcal{C}_{g^l,j} \QQQ_k=\{Q_0\cap g^{-l}(Q_1)\cap\cdots\cap g^{-l(j-1)}(Q_{j-1}): Q_i \in \QQQ_k, i=0,\dots,j-1\}.
%$$
Let $\NNN_{l,n,k}$ be the set  of cylinders in $\mathcal{C}_{g^l,n} \QQQ_k$ that intersect any element of $\mathcal{R}^{(ln)}$. From Claim 2 of \cite{VV} we have the following.  

\begin{lemma}\label{l.NNN}  Let $k\geq 1$ be large and fixed. Then, there exists some $n_0$ such that 
$$
\# \NNN_{l,n,k} \leq \# \QQQ_k \times [C_0L^{lm}]^n \times e^{(\log q + \epsilon(\alpha))nl},
$$ for every large $n\geq n_0$. 

\end{lemma}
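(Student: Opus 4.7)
The plan is to bound $\#\NNN_{l,n,k}$ by fixing an element $R\in\mathcal{R}^{(ln)}$, counting the $g^l$-cylinders in $\mathcal{C}_{g^l,n}\QQQ_k$ that intersect $R$, and then summing over $R$. To exploit the $l$-block structure of $R$, I would regroup its $ln$-long itinerary $(j_0,\dots,j_{ln-1})$ into $n$ blocks of length $l$ and set
$$D_i:=Q_{j_{il}}\cap g^{-1}(Q_{j_{il+1}})\cap\cdots\cap g^{-(l-1)}(Q_{j_{il+l-1}}),$$
so that $R=\bigcap_{i=0}^{n-1}g^{-il}(D_i)$. The crucial observation is that if $C=\bigcap_{i=0}^{n-1}g^{-il}(A_i)$ with $A_i\in\QQQ_k$ satisfies $C\cap R\neq\emptyset$, picking $x\in C\cap R$ gives $g^{il}(x)\in A_i\cap D_i$ for every $i$; hence any admissible tuple $(A_0,\dots,A_{n-1})$ must satisfy $A_i\cap D_i\neq\emptyset$ for all $i$.

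Next, I would bound $\mathrm{diam}(D_i)$. Since the inverse branches of $g$ are Lipschitz with uniform constant at most $L$ (by (H1)) and $g^l(D_i)\subset N$, iterating the contraction $l$ times yields $\mathrm{diam}(D_i)\le L^l\,\mathrm{diam}(N)$. Fixing $k$ large once and for all, I would apply Lemma 6.2 of \cite{VV} with $D=L^l\mathrm{diam}(N)/\mathrm{diam}(\hat\QQQ_k)$, which shows that the number of $A_i\in\QQQ_k$ meeting $D_i$ is at most $C_0 L^{lm}$, where the $k$-dependent factor $(\mathrm{diam}(N)/\mathrm{diam}(\hat\QQQ_k))^m$ has been absorbed into the constant $C_0$.

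Combining these ingredients, for each fixed $R$ the number of admissible tuples $(A_0,\dots,A_{n-1})$ — and hence of cylinders in $\mathcal{C}_{g^l,n}\QQQ_k$ meeting $R$ — is at most $(C_0 L^{lm})^n$. Summing over $R\in\mathcal{R}^{(ln)}$ and using the estimate $\#\mathcal{R}^{(ln)}\le e^{(\log q+\epsilon(\alpha))ln}$ (which holds for every $n\ge n_0$, by the displayed inequality quoted from \cite{VV}), one obtains
$$\#\NNN_{l,n,k}\le (C_0 L^{lm})^n\,e^{(\log q+\epsilon(\alpha))ln},$$
which implies the claimed bound after inserting the harmless factor $\#\QQQ_k$.

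The main obstacle is the uniformity of the constant $C_0$: one must choose $k$ first (so that $\mathrm{diam}(\hat\QQQ_k)$ is small but fixed), and only afterwards select $n_0$, so that $C_0$ depends on $k$ and on $\mathrm{diam}(N)$ but neither on $l$ nor on $n$. A secondary point to verify is that $\QQQ_k$ is a cover rather than a partition, so the map from tuples to set-theoretic cylinders is many-to-one; since we bound the number of tuples, this only inflates our count and is harmless.
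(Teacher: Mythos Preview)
Your counting strategy has a genuine gap in the diameter step. The bound $\mathrm{diam}(D_i)\le L^{l}\,\mathrm{diam}(N)$ is vacuous: since $D_i\subset N$ one already has $\mathrm{diam}(D_i)\le\mathrm{diam}(N)$, so the factor $L^{l}$ has been inserted artificially to make the final answer look right. Feeding this into Lemma~6.2 of \cite{VV} with $D=L^{l}\,\mathrm{diam}(N)/\mathrm{diam}(\hat\QQQ_k)$ gives at most $C_0\,L^{lm}\bigl(\mathrm{diam}(N)/\mathrm{diam}(\hat\QQQ_k)\bigr)^{m}$ elements of $\QQQ_k$ meeting $D_i$. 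When you ``absorb'' the factor $\bigl(\mathrm{diam}(N)/\mathrm{diam}(\hat\QQQ_k)\bigr)^{m}$ into $C_0$, the new constant depends on $k$ and is then raised to the $n$-th power. The stated lemma, by contrast, has the $k$-dependent piece $\#\QQQ_k$ appearing only to the first power, with $C_0$ the fixed dimensional constant from Lemma~6.2. This distinction is exactly what is used in Proposition~\ref{p.entropyestimates}: there one takes $n\to\infty$ with $k$ fixed (so $\tfrac1n\log\#\QQQ_k\to 0$), and only afterwards lets $\gamma\to 0$, forcing $k\to\infty$. Your bound does not survive that order of limits.

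The argument in Claim~2 of \cite{VV}, which the paper is invoking, is inductive rather than coordinate-wise. One first chooses $A_{n-1}\in\QQQ_k$ freely (this is the source of the factor $\#\QQQ_k$). Then, given $A_{i+1}$, any admissible $A_i$ must meet the set $\psi_i(A_{i+1})$, where $\psi_i$ is the inverse branch of $g^{l}$ determined by the $l$-block $(Q_{j_{il}},\dots,Q_{j_{(i+1)l-1}})$ of $R$. Since each inverse branch of $g$ is $L$-Lipschitz, $\psi_i$ is $L^{l}$-Lipschitz, so $\mathrm{diam}\,\psi_i(A_{i+1})\le L^{l}\,\mathrm{diam}(\hat\QQQ_k)$. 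Now Lemma~6.2 applies with $D=L^{l}$ (independent of $k$), giving at most $C_0 L^{lm}$ choices for $A_i$. Iterating yields $\#\QQQ_k\cdot(C_0 L^{lm})^{n-1}$, and summing over $R\in\mathcal R^{(ln)}$ gives the stated bound with the $k$-dependence in the right place.
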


Furthermore, from Claim 1 in \cite{VV} and $n_0$ perhaps larger we know that for each $0<\delta< \gamma$ and $n\geq n_0$ we have
$
B(k, \gamma)\subset B(ln, \gamma-\delta)$
for each $ln\leq k< (n+1)l.$

Now, we are able to prove Proposition~\ref{p.entropyestimates}. 

\begin{proof}[Proof of Proposition~\ref{p.entropyestimates}]  Fix $\gamma>0$ and $l$ sufficiently large. Given $n\geq 1$, denote by $E_{n} \subset \hat\SSS_{ln}$  any maximal $(n,\gamma)$-separated set for $g^l$. If $\diam \hat \QQQ_k<\gamma$, by the construction,   we have $\NNN_{l,n,k}$ covers $E_n$ and each element of $\NNN_{l,n,k}$ intersects $E_n$ in at most one point. By Lemma~\ref{l.NNN}, we have 
\begin{equation}\label{eq.entro}
\# E_n \leq \# \NNN_{l,n,k} \leq \# \QQQ_k \times [C_0L^{lm}]^n \times e^{(\log q + \epsilon(\alpha))nl}.
\end{equation}

Since the diameter of each fiber $M_x$  is uniformly bounded from above, there exists a natural number $m_0$ such that for each point $ x\in E_j$, we may choose points $F_x=\{y_1(x),y_2,\dots,y_{m_0}(x)\}\subset M_x$ such that $F_x$ is a $\gamma$-dense set in the fiber $M_x$. We claim that 
$$
F_n:=\bigcup\limits_{x\in E_n} F_x,
$$ is a maximal $(n, K\gamma)$-separated set for $f^l$ and $\SSS_{ln}$, where $K=2C$ and $C$ is as in (H4). First, since $(x,ln)\in \hat\SSS$, we know $(y_i(x),ln)\in \SSS$ for every $i=1,\dots, m_0$. To check that $F_n$ is maximal, take any $(z,ln) \in \SSS$. By definition, $\pi(z) \in \hat \SSS_{ln}$ and by the maximality of $E_n$ there exists a point $x\in E_n$ such that for every $i=0,\dots,n-1$ we have 
$$
d_N(g^{li}(x),g^{li}(\pi(z))) \leq  \gamma.
$$  

Consider the holonomy $h_{x,\pi(z)}:M_z\cap \Lambda \rightarrow M_x\cap \Lambda$. By the $\gamma$-density of $F_x$ in $M_x$ we have that there exists a point $y\in M_x$ such that if $z'= h_{x,\pi(z)}(z)$, then
$$
d_M(y,z')\leq \gamma.
$$
Now, we estimate $d_M(f^{li}(y),f^{li}(z))$, for $i=0,\dots,n-1$:
$$
d_M(f^{li}(y),f^{li}(z))\leq d_M(f^{li}(y),f^{li}(z')) + d_M(f^{li}(z'),f^{li}(z)) \leq
$$
$$ 
\lambda^{li} d_M(y,z') + C d_N(g^{li}(x),g^{li}(\pi(z))) \leq K \gamma.
$$

Thus, $F_n$ is a maximal $(n, K\gamma)$-separated set for $f^l$ and $\SSS_{ln}$. In order to estimate $\# F_n$, observe that  $\#F_n\leq \# E_n \times m_0$ and  
$$ 
h(\SSS,K\gamma;f^l) \leq \limsup \frac{1}{n}\log \#F_n \leq lm \log L + (\log q + \epsilon(\alpha))l   + \log C_0
$$
by Equation\eqref{eq.entro}.
Taking the limit when $\gamma \rightarrow 0$:

$$
h(\SSS;f^l) \leq  lm \log L + (\log q + \epsilon(\alpha))l + \log C_0.
$$
Using that $h(\SSS;f)= (1/l)h(\SSS;f^l)$ we finish the proof. 

\end{proof}

\begin{proof}[Proof of Theorem \ref{t.unique}]
The estimates in Theorem \ref{p.entropyestimates} are the key to the pressure estimates of the proofs of Theorem \ref{t.unique} and Theorem \ref{t.variation}.  Just as in the proof of Theorem 3.3 in \cite{CFT_BV} we know that 
$$
\begin{array}{llll}
P(\SSS, \ph; g) &\leq \alpha\sup_{\pi(x)\in\Omega_\rho}\ph(x) + (1-\alpha)(\sup_{\Lambda}\ph) + h(\SSS)\\
& \leq \alpha\sup_{\pi(x)\in\Omega_\rho}\ph(x) + (1-\alpha)(\sup_{\Lambda}\ph) + \log q + \epsilon(\alpha) + m\log L \\
&=\Psi(\ph)< P(\ph; f|_\Lambda).
\end{array}
$$

%In the previous section we proved that the pressure of obstructions to expansivity are bounded by the pressure on $\SSS$, then $\ph$ has the Bowen property, and $\GGG$ has specification.  
Hence, Theorem \ref{t.generalM} shows there exists a unique equilibrium state.
\end{proof}

\begin{proof}[Proof of Theorem \ref{t.variation}]
Now assume that $\ph$ is H\"older continuous and that 
$$
\sup \ph - \inf \ph < \log \deg(g) - \log q - \epsilon(\alpha) - m\log L,
$$
then
$\sup \ph-\inf\ph < h_{\mathrm{top}}(f|_\Lambda)-h(\SSS)$.  Then
$
h(\SSS) + \sup \ph < h_{\mathrm{top}}(f|_\Lambda) + \inf \ph$ and this implies that
$
P(\SSS, \ph; f)< P(\ph; f)$.
In the previous section we proved that the pressure of obstructions to expansivity are bounded by the pressure on $\SSS$, then $\ph$ has the Bowen property, and $\GGG$ has specification.  Hence, Theorem \ref{t.generalM} shows there exists a unique equilibrium state.
\end{proof}

\section{Proof of Theorem \ref{t.srb}}\label{s.srb}

In this section we prove Theorem \ref{t.srb} and provide an example
%\tf{example should be it's own section even if it is short}
 where the result will hold.  The proof is similar to Theorem C in \cite{CFT_BV} except we do not need to worry about the scale of the perturbation.

From the proof of Lemma 7.1 in \cite{CFT_BV} we see that $P(\ph^{\mathrm{geo}}; f)\geq 0$.  From the fact that 
$$\log q + \epsilon(\alpha) + m\log L<  -\sup  \ph^{\mathrm{geo}}
$$
we see that 
$$
\begin{array}{llll}
\Psi(\ph^{\mathrm{geo}})&=\alpha\sup_{\pi(x)\in\Omega_\rho}\ph^{\mathrm{geo}}(x) + (1-\alpha)(\sup_{\Lambda}\ph^{\mathrm{geo}}) + \log q + \epsilon(\alpha) + m\log L \\
& \leq \sup  \ph^{\mathrm{geo}} + \log q + \epsilon(\alpha) + m\log L<  0 \leq P(\ph^{\mathrm{geo}}; f).
\end{array}
$$
Therefore, by Theorem \ref{t.unique} we know that $\ph^{\mathrm{geo}}$ has a unique equilibrium state.

To show that $P(t\ph^{\mathrm{geo}}; f)$ has a unique equilibrium state for $t\in [0,1]$ we show that $\Psi(t\ph^{\mathrm{geo}})<P(t\ph^{\mathrm{geo}}; f)$ for all $t\in [0,1]$.  Since the inequality is strict it will hold in a neighborhood of $[0,1]$.

Notice that 
\begin{equation}\label{eq.boundedabove}
P(t\ph^{\mathrm{geo}}; f)\geq h_{\mathrm{top}}(f|_\Lambda) + t\inf \ph^{\mathrm{geo}}=\log (\deg g) + t\inf \ph^{\mathrm{geo}} = l_1(t).
\end{equation}
Also, we have 
\begin{equation}\label{eq.boundedbelow}
\Psi(t\ph^{\mathrm{geo}})\leq t\sup \ph^{\mathrm{geo}} + \log q +\epsilon(\alpha) + m\log L = l_2(t).
\end{equation}

Let
$$t_0=-\left(\frac{\log q + \epsilon(\alpha) + m\log L}{\sup \ph^{\mathrm{geo}}}\right).
$$
We know that $t_0\in (0,1)$.  For $t\in (t_0, 1]$ we have 
$\Psi(t\ph^{\mathrm{geo}})<0\leq P(\ph^{\mathrm{geo}}; f|_\Lambda)\leq P(t\ph^{\mathrm{geo}}; f|_\Lambda).$

For $t=0$ we see that $l_1(0)>l_2(0)$ by \eqref{eq.srbrelation}.  We know that the root of $l_2$ is $t_0$.  The root of $l_1$ is $-\log(\deg g)/\inf \ph^{\mathrm{geo}}.$  By \eqref{eq.srbrelation} we know that $t_0<-\log(\deg g)/\inf \ph^{\mathrm{geo}}$ and so $l_1(t_0)>l_2(t_0)$.  This implies that $P(t\ph^{\mathrm{geo}}; f|_\Lambda)\geq l_1(t)>l_2(t)\geq \Psi(t\ph^{\mathrm{geo}})$ for all $t\in [0, t_0]$.

Now assume that $f$ is a $C^2$ diffeomorphism and let $\mathcal{M}_e(f)$ be the collection of $f$-invariant ergodic measures.  For $\mu \in \mathcal{M}_e(f)$, let $\lambda_1 < \cdots < \lambda_\ell$ be the Lyapunov exponents of $\mu$, and let $d_i$ be the multiplicity of $\lambda_i$, so that $d_i = \dim E_i$, where for a Lyapunov regular point $x$ for $\mu$ we have 
\[
E_i(x) = \{0\} \cup \{ v\in T_xM \, :\,  \lim_{n\to\pm \infty} \tfrac 1n \log \|Df^n_x(v)\| = \lambda_i \} \subset T_x M.
\]
Let $k=k(\mu) = \max \{1\leq i\leq \ell(\mu) \, :\,  \lambda_i \leq 0\}$, and let $\lambda^+(\mu) = \sum_{i>k} d_i(\mu) \lambda_i(\mu)$ be the sum of the positive Lyapunov exponents, counted with multiplicity.

The Margulis--Ruelle inequality  \cite[Theorem 10.2.1]{BP07} gives $h_\mu(f) \leq \lambda^+(\mu)$.   Ledrappier and Young \cite{LY} proved that equality holds if and only if $\mu$ has absolutely continuous conditionals on unstable manifolds. This implies that for any ergodic invariant measure $\mu$, we have
\begin{equation}\label{eqn:nonpos}
h_\mu(f) - \lambda^+(\mu)\leq 0,
\end{equation}
with equality if and only if $\mu$ is absolutely continuous on unstable manifolds.  So an ergodic measure $\mu$ is an SRB measure if and only if it is hyperbolic and equality holds in \eqref{eqn:nonpos}.
% We will want to find an appt the relationship between $\lambda^+(\mu)$ and $\int\ph\,d\mu$.  

We now show that $P(\phigeo; f|_\Lambda) \leq 0$.  Combining this with the arguments above we see that $P(\phigeo ; f|_\Lambda) =  0$. 
We know that $\phigeo$ has a unique equilibrium state $\mu$; to show that %the unique equilibrium state $\mu$ for $\phigeo$ is SRB
$\mu$ is the SRB measure, we need to show that  $-\lambda^+(\mu) = \int\phigeo\,d\mu$.

Since the splitting on $\Lambda$ is partially hyperbolic and the stable direction is uniformly contracting we know that $\int\phigeo d\mu \geq - \lambda^+(\mu)$ with equality if and only if all Lyapunov exponents associated with $E^{cu}$ are nonnegative.  From this we know that $h_\mu(f) -\lambda^+(\mu)\leq h_\mu(f) + \int \phigeo d\mu$.

Now suppose that there is a nonpositive exponent associated with $E^{cu}$.  Then there exists some set $Z\subset \Lambda$ such that $\mu(Z)=1$ and $v\in E^{cu}(z)$ such that $\lim_{n\to \infty} \frac 1 n \log \|Df^n_z(v)\|\leq 0$.  For $z\in Z$ if we define 
$$
\Lambda^-=\{ x\, : \exists K(x), \frac 1 n \sum_{i=0}^{n-1} \chi_{\Omega}(\pi f^i x)\geq \alpha, \forall n\geq K(x)\},
$$
 then we have $z\in \Lambda^-$.  To see this notice that if $z\notin \Lambda^-$ then by property [E] that there exists a sequence $n_k$ such that $\frac{1}{n_k}\sum_{i=0}^{n_k-1}\chi_{\Omega}(\pi(f^{i}x))<\alpha$ and so 
$$\lim_{n_k\to \infty} \frac{1}{n_k}\log \|Df_z^{n_k}\|<0.$$
Then $\mu(\Lambda^-)=1$ and this implies that 
$$
h_\mu(f) - \lambda^+(\mu)\leq h_\mu(f) + \int \phigeo d\mu\leq P(\SSS, \phigeo)\leq \Psi(\phigeo)<0.$$  
So this implies that %$h_\mu(f) + \int \phigeo d\mu\leq 0$
%= h_\mu(f) - \lambda^+(\mu)\leq 0$ 
%and so we have 
$P(\phigeo; f|_\Lambda)\leq 0$ and so $P(\phigeo; f|_\Lambda)=0$.

By \eqref{eq.srbrelation} we know that $\sup \phigeo<0$ and so $t\mapsto P(t\phigeo; f|_\Lambda)$ is a convex strictly decreasing function from $\mathbb{R}\to \mathbb{R}$ and hence $1$ is the unique root.

Hence, $h_\mu(f) - \lambda^+(\mu)=0$ and $\mu$ is an SRB measure.  Assume there is a different SRB measure $\nu$ that is ergodic.  Then $h_\nu(f) - \lambda^+(\nu)\leq h_\nu(f) + \int \phigeo d\nu< P(\phigeo; f|_\Lambda)=0$ since $\mu$ is a unique equilibrium state.   This completes the proof of Theorem \ref{t.srb}.

\medskip

\noindent{\it Example.}  We now provide an example of systems that satisfy the conditions of theorem \ref{t.srb}.  Let $g_0$ be a linear expanding map of the $d$-torus with $d$ distinct positive eigenvalues.  For a fixed point $p$ of $g_0$ we deform $g_0$ by a pitchfork bifurcation in a small neighborhood of $p$ so that after the deformation $p$ is a hyperbolic saddle for the local diffeomorphism $g$.  Furthermore, we assume that the perturbation is performed  so that $p$ is slightly econtracting in the direction associated with the weakest eigenvalue and expanding in the other directions, see Figure  \ref{fig:pitchfork}.  By construction we know that there are two new fixed points created for $g$, and $g$ agrees with $f_0$ outside of the neighborhood of $p$.

For the neighborhood of $p$ sufficiently small and the contraction in the weak direction close to 1 we see that $g$ satisfies the properties (H1) and (H2).

\begin{figure}[htbp]
\includegraphics[width=\textwidth]{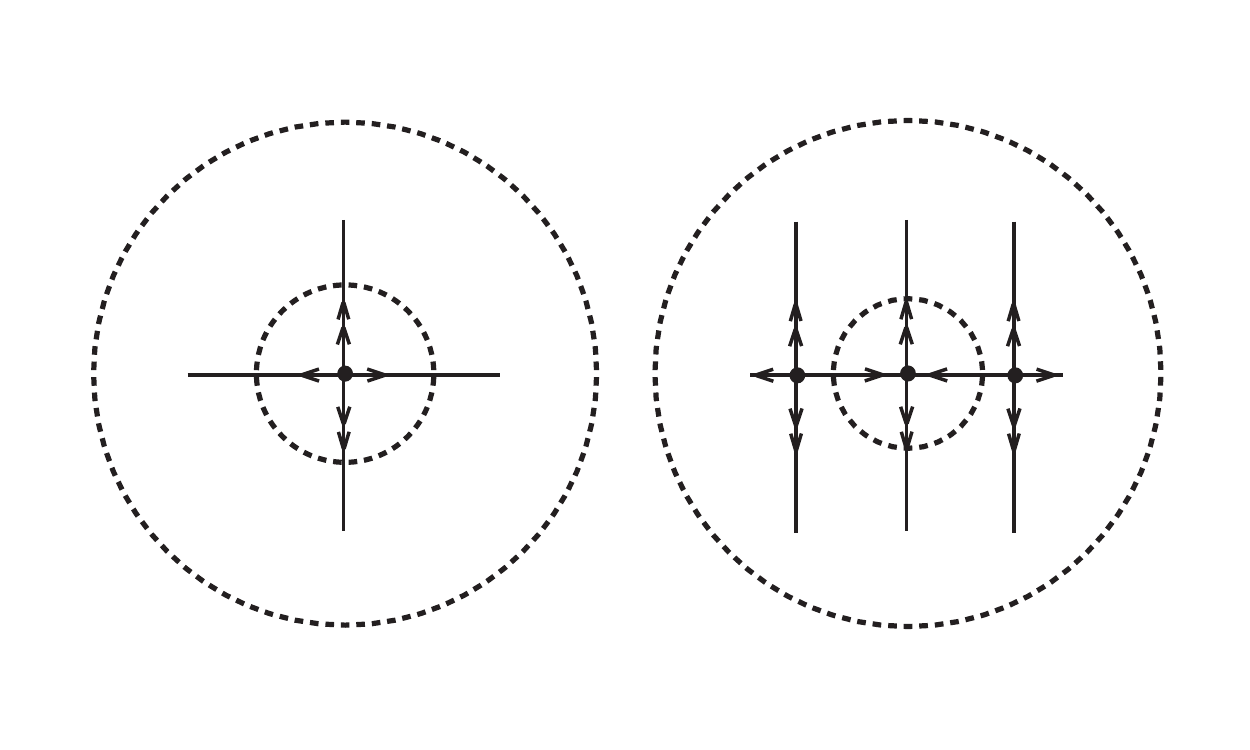}
\caption{Pitchfork bifurcation}
\label{fig:pitchfork}
\end{figure}

We now construct the map $f$.  Let $M=\mathbb{T}^d\times (D^2)^d$ where $D^2$ is the 2-dimensional disk.  Now let 
%\tf{Do we need this formula or just say one exists?}
$$f(t_1,..., t_d, z_1,..., z_d) = (g(t_1,..., t_d)), \frac{1}{2^{\deg g}} z_1 + \frac 1 2 e^{2\pi t_1 i}, \cdots ,\frac{1}{2^{\deg g}} z_d + \frac 1 2 e^{2\pi t_d i} ).$$
One can check that this satisfies the properties (H3)-(H5).  

Furthermore, by the fact that the neighborhood around $p$ can be chosen arbitrarily small and that the contraction at $p$ can be made arbitrarily close to 1 we see that restricted to the attractor $\Lambda$ associated with $f$ we see that the map $f$ and the potential function $\phigeo$ satisfies the hypothesis of Theorem \ref{t.srb}.

%\section{s.Markov}
%
%Let $M$ be a compact Riemannian manifold and $f:M\to M$ be a $C^{1+}$ diffeomorphism.  Assume there exists a compact set $\Lambda\subset M$ such that the following hold:
%
%\begin{itemize}
%\item[(A1)] There exists a neighborhood $O$ of $\Lambda$ such that $f(\overline{O})\subset O$ and $\Lambda=\bigcap_{n=0}^\infty f^n(O).$
%\item[(A2)] The subset $\Lambda$ is partially hyperbolic with a splitting of the form $E^{x} \oplus E^{cu}$ where each splitting is nontrivial.
%\item [(A3)] There exists an $f$-invariant foliation $\mathcal{W}^{cu}$ that is tangent to the center-unstable subbundle $E^{cu}$ and is contained in $\Lambda$, and there is an $f$-invariant stable foliation $\mathcal{W}^s$ tangent to the stable subbundle in $\Lambda$.

%\section{Natural extensions}\label{s.naturalextensions}

\bibliography{EqStatesPH}{}
\bibliographystyle{plain}
\end{document}